\documentclass[a4paper,oneside]{article}
\usepackage[utf8]{inputenc}
\usepackage[a4paper,margin=3cm]{geometry} 
\usepackage{amsmath}
\usepackage{amsthm}
\usepackage{amscd}
\usepackage{amssymb}
\usepackage{MnSymbol}
\usepackage{latexsym}
\usepackage{eucal}
\usepackage{dsfont}
\usepackage{mathtools}
\usepackage{enumitem}

\usepackage[colorlinks,pdftex]{hyperref}
\hypersetup{
linkcolor=black,
citecolor=black,
pdftitle={Moderate deviations and Strassen's law for additive processes},
pdfauthor={Franziska K\"uhn, Ren\'e L. Schilling},
pdfkeywords={moderate deviations, Strassen's law, additive processes, functional law of the iterated logarithm},
}

\widowpenalty=10000
\clubpenalty=10000
\displaywidowpenalty=10000

\makeatletter
\renewcommand\section{\@startsection{section}{1}{0mm}{-1.5\baselineskip}{\baselineskip}{\normalsize\bfseries\sffamily}}
\renewcommand\subsection{\@startsection{subsection}{1}{0mm}{-\baselineskip}{\baselineskip}{\normalsize\bfseries\sffamily}}
\makeatother

\makeatletter
\def\@fnsymbol#1{\ensuremath{\ifcase#1\or *\or **\or \dagger\or \ddagger\or
   \mathsection\or \mathparagraph\or \|\or \dagger\dagger
   \or \ddagger\ddagger \else\@ctrerr\fi}}

\newlength{\preskip}
\setlength{\preskip}{11\p@ \@plus.1\p@ minus 1\p@}
\newlength{\postskip}
\setlength{\postskip}{11\p@ \@plus.1\p@ minus 1\p@}
\makeatother

\newtheoremstyle{theorem}{\preskip}{\postskip}{\itshape}{}{\bfseries}{}
{.5em}{\textbf{\thmname{#1}\thmnumber{ #2} (\thmnote{ #3})}}
\newtheoremstyle{definition}{\preskip}{\postskip}{\normalfont}{0pt}{\bfseries}{}{.5em}{}
\newtheoremstyle{remark}{\preskip}{\postskip}{\normalfont}{0pt}{\bfseries}{}{.5em}{}

\swapnumbers
\theoremstyle{theorem} \newtheorem{thm}{Theorem}[section]
\theoremstyle{theorem} \newtheorem{lem}[thm]{Lemma}
\theoremstyle{theorem} 
\theoremstyle{theorem} \newtheorem{kor}[thm]{Corollary}
\theoremstyle{definition} 
\theoremstyle{definition} 
\theoremstyle{definition} \newtheorem*{ack}{Acknowledgements}
\theoremstyle{remark} 
\theoremstyle{remark} 
\theoremstyle{definition}  \newtheorem{bsp}[thm]{Example}
\theoremstyle{definition}  
\setcounter{tocdepth}{0}
\setlength{\parindent}{0pt}

\DeclareMathOperator \var {Var}

\DeclareMathOperator \sgn {sgn}

\DeclareMathOperator \bv {BV[0,1]}

\DeclareMathOperator \dom {Dom}

\newcommand\komp[1]{\langle#1\rangle}

\newcommand{\I}{\mathds{1}}
\newcommand\floor[1]{\left\lfloor #1 \right\rfloor}
\newcommand\fa{\qquad \text{for all \ }}

\newcommand{\cadlag}{c\`adl\`ag }

\newcommand{\als}{\qquad \text{a.\,s.}}

\newcommand\mc[1] {\mathcal{#1}}
\newcommand\mbb[1] {\mathds{#1}}

\newcommand{\eps}{\varepsilon}
\newcommand{\ac}{AC[0,1]}

\newcommand*{\lcdot}{\,\raisebox{-0.25ex}{\scalebox{1.4}{$\cdot$}}\,}

\linespread{1.2}

\author{%
    Franziska K\"{u}hn\thanks{Institut f\"ur Mathematische Stochastik, Fachrichtung Mathematik, Technische Universit\"at Dresden, 01062 Dresden, Germany, \texttt{franziska.kuehn1@tu-dresden.de}} 
\and 
    Ren\'e L.\ Schilling\thanks{Institut f\"ur Mathematische Stochastik, Fachrichtung Mathematik, Technische Universit\"at Dresden, 01062 Dresden, Germany, \texttt{rene.schilling@tu-dresden.de}}
}

\title{Moderate deviations and Strassen's law for additive processes}

\date{}

\begin{document}

\maketitle

\abstract{\noindent 
    We establish a moderate deviation principle for processes with independent increments under certain growth conditions for the characteristics of the process. Using this moderate deviation principle, we give a new proof for Strassen's functional law of the iterated logarithm. In particular, we show that any square-integrable L\'evy process satisfies Strassen's law. 
\par\medskip

\noindent\emph{Keywords:} moderate deviations; additive processes; Strassen's law; functional limit theorem. \par \medskip

\noindent\emph{2010 Mathematics Subject Classification:} Primary: 60F10. Secondary: 60F17, 60G51, 60G17.
}

\section{Introduction} \label{s-intro}
Let $(X_t)_{t \geq 0}$ be a L\'evy process such that $\mathbb{E}X_t=0$, and assume that the weak Cram\'er condition holds, i.\,e.\ $\mbb{E}e^{\lambda |X_1|}<\infty$ for some $\lambda>0$. It is known (see e.\,g.\ Mogulskii \cite{mog93} or Feng--Kurtz \cite{feng}) that the family of scaled L\'evy process $(X(t \lcdot)/S(t))_{t>0}$ obeys a moderate deviation principle in the space of \cadlag functions $(D[0,1],\|\cdot\|_{\infty})$ with good rate function \begin{equation*}
		I(f) := \begin{cases} \displaystyle\frac{1}{2 \var X_1} \int_0^1 f'(s)^2 \, ds, & f :[0,1] \to \mbb{R}\,\, \text{absolutely continuous}, f(0)=0, \\ \displaystyle\infty, & \text{otherwise}, \end{cases}
	\end{equation*}
	and speed $S(t)^2/t$ if the scaling function $S$ satisfies \begin{equation*}
		\frac{S(t)}{\sqrt{t}} \xrightarrow[]{t \to \infty} \infty \quad \text{and} \quad \frac{S(t)}{t} \xrightarrow[]{t \to \infty} 0.
	\end{equation*}
	The assumptions on the moments of $X_1$ have been substantially weakened by Gao \cite{gao05}. In this paper, we consider the corresponding moderate deviation principle for additive processes, see Section~\ref{s-def}. Large deviation results for additive processes have been obtained by Puhalskii \cite{puc94} and Liptser--Puhalskii \cite{lip-puc} for the scaling $S(t)=t$ under rather abstract conditions on the characteristics and the stochastic exponential, respectively.
	We state sufficient conditions in terms of the growth of the characteristics and give a direct proof of the moderate deviation principle using the well-known G\"artner--Ellis theorem. In particular, we obtain two representations for the good rate function $I$. \par
	As an application, we study Strassen's law for additive processes. Strassen \cite{stra64} proved that for a (one-dimensional) Brownian motion $(B_t)_{t \geq 0}$ the set \begin{equation*}
		\left\{ f \in C[0,1];\; \exists t>0: f(s)=\frac{B(t s)}{\sqrt{2t \log \log (t \vee e^e)}} \, \, \text{for all} \, s \in [0,1] \right\}
	\end{equation*}
	is almost surely relatively compact in $(C[0,1],\|\cdot\|_{\infty})$ and its limit points are given by \begin{equation*}
		\left\{f: [0,1] \to \mathbb{R};\; f(0)=0, f \,\, \text{absolutely continuous}, \int_0^1 f'(s)^2 \, ds  \leq 1 \right\};
	\end{equation*}
	this result is called Strassen's law (or functional law of the iterated logarithm). There have been several generalizations since then; Wichura \cite{wic73} and Wang \cite{wang} studied Strassen's law for additive processes and Maller \cite{mal08} obtained a small-time version of Strassen's law for L\'evy processes. More recently, Gao \cite{gao09} proved a Strassen law for a subclass of locally square-integrable martingales. Strassen's law has a variety of applications, e.\,g.\ functional limit theorems such as the law of the iterated logarithm; see e.\,g.\ Strassen \cite{stra64}, Wichura \cite{wic73} and Buchmann et al. \cite{bms08}. \par
	We show that, whenever a certain moderate deviation principle holds for an additive processes, the process satisfies a functional law of the iterated logarithm. Combining this result with the moderate deviation principle proved in the first part of this paper, this covers the corresponding results in Wang \cite{wang}, cf.\ Corollary~\ref{add-n05}, and Wichura \cite{wic73}. As a special case, we find that any square-integrable L\'evy process satisfies Strassen's law.
	Let us emphasize that we do not intend to formulate the results in the most general form but to present an alternative proof for Strassen's law. The results discussed here are generalizations of these obtained by K\"uhn \cite{fk14}. \par
	The paper is organized as follows. In Section~\ref{s-def}, we introduce basic definitions and notation. The main results are stated in Section~\ref{s-main} and proved in Section~\ref{s-proof}. Finally, in Section~\ref{s-rem}, we sketch some generalizations.

\section{Basic definitions and notation} \label{s-def}
	Let $(X_t)_{t \geq 0}$ be a (real-valued) stochastic process on a complete probability space $(\Omega,\mc{A},\mbb{P})$. We call $(X_t)_{t \geq 0}$ an \emph{additive process} if $(X_t)_{t \geq 0}$ has \cadlag sample paths, independent increments and $X_0=0$. If, additionally, $(X_t)_{t \geq 0}$ has stationary increments, i.\,e.\ $X_t-X_s \sim X_{t-s}-X_0$, $s \leq t$, then $(X_t)_{t \geq0}$ is a \emph{L\'evy process}. Any mean-zero square-integrable additive process $(X_t)_{t \geq 0}$ admits a \emph{L\'evy--It\^o decomposition} of the form
\begin{equation*}
		X(t) = X^c(t) + \int_0^t \!\! \! \int z \, (N(dz,dr)-\nu(dz,dr)), \qquad t \geq 0,
	\end{equation*}
	where $X^c$ denotes the continuous martingale part, $N$ the jump measure of $(X_t)_{t \geq 0}$ and $\nu$ its compensator. Moreover, there exist increasing deterministic functions $A,C$ and a family of $\sigma$-finite measures $K_r$ on $(\mbb{R},\mc{B}(\mbb{R}))$ such that $C_t \geq 0$, $\int (z^2 \wedge 1) K_r(dz) \leq 1$, and \begin{equation}
		\komp{X^c}_t = C_t, \qquad  \qquad \nu(dz,dr) = K_r(dz) \, dA_r. \label{add-neq05}
	\end{equation}
	If $(X_t)_{t \geq 0}$ is a L\'evy process, then $A$ and $C$ are linear, and $K_r$ does not depend on $r$. We denote by  \begin{equation*}
		[\nu]_{s,t} := \inf\left\{\varrho>0;\; \nu(B(0,\varrho)^c \times [s,t])=0\right\}, \qquad (\inf \emptyset := \infty)
	\end{equation*}
	the maximal jump height during the time interval $[s,t]$. The process $(X_t)_{t \geq 0}$ has no \emph{fixed jump discontinuities} if $\nu(\mbb{R} \times \{t\} ) =0$ for any $t>0$. Our standard reference for additive processes is the monograph by Jacod--Shiryaev \cite{jac}, we use Sato \cite{sato} for L\'evy processes. \par
	For simplicity, we assume that the mapping $(\Omega,\mc{A}) \ni \omega \mapsto X(\lcdot,\omega) \in (D[0,1],\|\cdot\|_{\infty})$ is measurable. Here $D[0,1]$ denotes the space of \cadlag (i.\,e.\ right-continuous with left-hand limits) functions $f:[0,1] \to \mbb{R}$ endowed with the uniform norm $\|\cdot\|_{\infty}$.  By $\ac$ we denote the set of absolutely continuous functions $f:[0,1] \to \mbb{R}$. \par
	Recall that a family $(X^t)_{t>0}$ of stochastic processes with values in a metric space $(M,d)$ satisfies a  \emph{large deviation principle in $(M,d)$ with good rate function $I:M \to [0,\infty]$ and speed $(a_t)_{t>0} \subseteq (0,\infty)$} if $I$ has compact sublevel sets $\Phi(r):=\{f \in M; I(f) \leq r\}$, $a_t \to \infty$ as $t \to \infty$, and \begin{equation*}
		- \inf_{f \in U} I(f) \leq \liminf_{t \to \infty} \frac{1}{a_t} \log \mbb{P}(X^t \in U), \qquad
		\limsup_{t \to \infty} \frac{1}{a_t} \log \mbb{P}(X^t \in F) \leq - \inf_{f \in F} I(f)
	\end{equation*}
	holds for any open set $U \subseteq M$ and closed set $F \subseteq M$, respectively. Let $(X_t)_{t \geq 0}$ be a square-integrable stochastic process with \cadlag sample paths and $S:(0,\infty) \to (0,\infty)$ such that $S(t)/\var X_t \to 0$ as $t \to \infty$. We say that $(X(t \lcdot)/S(t))_{t>0}$ satisfies a \emph{moderate deviation principle with good rate function $I$ and speed $(a_t)_{t>0}$} if the family satisfies a large deviation principle in $(D[0,1],\|\cdot\|_{\infty})$ with good rate function $I$ and speed $(a_t)_{t>0}$.
	For a detailed discussion of large deviation theory, we refer the reader to the monographs by Dembo--Zeitouni \cite{dz} and Feng--Kurtz \cite{feng}.
	
\section{Main results} \label{s-main}

We are now in the position to state the main results.

\begin{thm} \label{add-n01}
	Let $(X_t)_{t \geq 0}$ be an additive process without fixed jump discontinuities such that $\mathbb{E}X_t = 0$ and \begin{equation}
		\lim_{t \to \infty} \frac{\var X_t}{t^\gamma} =: \sigma^2>0\label{add-neq11}
	\end{equation}
	exists for some $\gamma >0$. Let $S:(0,\infty) \to (0,\infty)$ be such that \begin{equation}
			\frac{S(t)}{t^{\gamma/2}} \xrightarrow[]{t \to \infty} \infty, \qquad \qquad \frac{S(t)}{t^{\gamma}} \xrightarrow[]{t \to \infty} 0, \label{add-neq13}
		\end{equation}	
	and \begin{equation}
		 \frac{S(t)}{S(\floor{t})} \xrightarrow[]{t \to \infty} 1. \label{add-neq15}
	\end{equation}
	Suppose that one of the following conditions holds.
	\begin{enumerate}[label*=\upshape(C\arabic*),ref=\upshape(C\arabic*)]
		\item\label{C2}  There exists a measure $G$ on $(\mbb{R},\mc{B}(\mbb{R}))$ such that $\int_{|z|>1} e^{2\lambda_0 |z|} \, G(dz)<\infty$ for some $\lambda_0>0$ and \begin{equation*}
			K_t(B(0,r)^c) \leq G(B(0,r)^c) \fa  t \geq 0,\: r \geq 1.
		\end{equation*}
		Furthermore,
		\begin{equation}
				\lim_{t \to \infty} |A_t| \frac{S(t)}{t^{2\gamma}}= 0. \label{add-neq19}
		\end{equation}
		\item\label{C1} \strut \vspace{-\baselineskip} \begin{equation}
				\lim_{t\to \infty} [\nu]_{0,t} \frac{S(t)}{t^\gamma} =0 \label{add-neq17}
		\end{equation}
		(In this case, we implicitly set $\lambda_0 := \infty$.)
	\end{enumerate}
	Then $(X(t\lcdot)/S(t))_{t > 0}$ satisfies a moderate deviation principle in $(D[0,1],\|\cdot\|_{\infty})$ with speed $S(t)^2/t^\gamma$ and good rate function \begin{equation}
		I(f) := \begin{cases} \displaystyle\frac{1}{2\sigma^2 \gamma} \int_0^1 \frac{f'(s)^2}{s^{\gamma-1}} \, ds, & f \in \ac,\: f(0)=0, \\ \displaystyle\infty, & \text{otherwise}. \end{cases} \label{add-neq21}
	\end{equation}
\end{thm}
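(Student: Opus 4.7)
The strategy I would use is the familiar two-step route for path-space large/moderate deviations: establish a finite-dimensional MDP by G\"artner--Ellis, then upgrade to $(D[0,1],\|\cdot\|_\infty)$ via exponential tightness, and finally match the projective-limit rate function with the integral expression~(\ref{add-neq21}).

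\textbf{Finite-dimensional MDP.} Fix $0=s_0<s_1<\dots<s_n\le 1$ and set $Y^t:=(X(ts_1)/S(t),\dots,X(ts_n)/S(t))$. Using independence of increments, for $\xi\in\mbb{R}^n$ the log-MGF splits into a telescoping sum over the blocks $X(ts_k)-X(ts_{k-1})$. By the L\'evy--Khintchine representation for additive processes (cf.~(\ref{add-neq05})), each block contributes
\[
\lambda\,\Delta A + \tfrac12\lambda^2\Delta C + \int\!\!\int(e^{\lambda z}-1-\lambda z)\,K_r(dz)\,dA_r, \qquad \lambda = \tfrac{S(t)}{t^\gamma}\sum_{j\ge k}\xi_j.
\]
By (\ref{add-neq13}) we have $\lambda\to 0$, so the essential step is a Taylor expansion $e^{\lambda z}-1-\lambda z\sim\tfrac12\lambda^2 z^2$ under the jump integral. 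Under (C1), $|z|\le[\nu]_{0,t}$ on the support of the compensator and (\ref{add-neq17}) gives $\lambda[\nu]_{0,t}\to 0$, making the expansion uniform; under (C2), I would split at $|z|=1$, treat small jumps by the quadratic estimate and large jumps by the exponential bound from $G$ (valid for $\lambda<\lambda_0$, which holds eventually), while (\ref{add-neq19}) kills the drift term $\lambda\Delta A$ in the scaling. Combined with~(\ref{add-neq11}), the second-order terms assemble into $\sigma^2(s_k^\gamma-s_{k-1}^\gamma)$, and the limiting cumulant is
\[
\Lambda(\xi)=\tfrac12\sigma^2\sum_{k=1}^n\Bigl(\sum_{j\ge k}\xi_j\Bigr)^{\!2}(s_k^\gamma-s_{k-1}^\gamma),
\]
i.e.\ the Gaussian cumulant with covariance $\sigma^2(s_i\wedge s_j)^\gamma$. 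G\"artner--Ellis then yields the finite-dimensional MDP.

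\textbf{Identification of the rate function.} The projective-limit rate function is the Legendre transform of $\Lambda$, which coincides with the Cameron--Martin norm of the Gaussian process with covariance $\sigma^2(s\wedge t)^\gamma$. Representing this process as $\sigma\int_0^s\sqrt{\gamma u^{\gamma-1}}\,dW_u$, its Cameron--Martin space consists of $f\in\ac$ with $f(0)=0$ and $f'(s)=\sigma\sqrt{\gamma s^{\gamma-1}}\,h(s)$ for some $h\in L^2([0,1])$, yielding $\tfrac12\|h\|_2^2=(2\sigma^2\gamma)^{-1}\int_0^1 f'(s)^2 s^{1-\gamma}\,ds$, exactly~(\ref{add-neq21}). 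Routine verification shows $I$ has compact sublevel sets in $(D[0,1],\|\cdot\|_\infty)$.

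\textbf{Exponential tightness (the hard part).} This is where most of the work lies, since the full MDP on $D[0,1]$ does not follow from finite-dimensional MDPs alone. I would prove that, for every $\eta>0$,
\[
\lim_{\delta\downarrow 0}\limsup_{t\to\infty}\frac{t^\gamma}{S(t)^2}\log\mbb{P}\!\left(\sup_{\substack{|s-r|\le\delta\\ s,r\in[0,1]}}\bigl|X(ts)-X(tr)\bigr|\ge \eta S(t)\right)=-\infty,
\]
which, together with the finite-dimensional MDP, implies the path-space MDP by a standard Dawson--G\"artner / projective-limit argument (as in Dembo--Zeitouni, Ch.~4). To get this bound I would discretise $[0,1]$ into a mesh of size $\delta$, apply an Ottaviani/Etemadi-type maximal inequality for the increments of an additive process (using independence of increments), and control each block with an exponential Markov inequality. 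The exponential moments are finite thanks to (C1) or (C2)---exactly the reason $\lambda_0$ was introduced---and the quadratic bound on the cumulant from the previous step provides a Gaussian-type tail at the right speed. Assumption~(\ref{add-neq15}) is needed to ensure that $S(t)/S(\lfloor t\rfloor)\approx 1$ so that blocking and regular-variation estimates pass from integer times to all $t$. Exponential tightness together with the finite-dimensional MDP then gives the stated moderate deviation principle.
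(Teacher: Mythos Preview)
Your overall strategy---finite-dimensional G\"artner--Ellis plus an oscillation/exponential-tightness bound, with the rate function identified as the Cameron--Martin norm of the Gaussian process with covariance $\sigma^2(s\wedge t)^\gamma$---is sound and is a legitimate alternative to the paper's route. The paper instead follows de Acosta: it passes to the integer-time discretisation $Z_n(s)=X(\lfloor ns\rfloor)$, applies an \emph{infinite-dimensional} G\"artner--Ellis theorem directly on $(D[0,1],\|\cdot\|_\infty)$ via the pairing with $\bv\cap D[0,1]$ (computing the limiting cumulant functional $\Lambda(\alpha)=\tfrac{\gamma\sigma^2}{2}\int_0^1 s^{\gamma-1}(\alpha(1)-\alpha(s))^2\,ds$ by Abel summation), shows exponential equivalence between $Z_{\lfloor t\rfloor}/S(\lfloor t\rfloor)$ and $X(t\,\cdot)/S(t)$, and finally identifies the Legendre transform of $\Lambda$ with $I$ by an explicit two-sided estimate. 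Your Cameron--Martin identification is cleaner than that bare-hands computation; conversely, the paper's approach avoids the Dawson--G\"artner layer altogether. The analytic input (Etemadi's inequality plus the exponential-moment bound on increments) is the same in both.

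One point needs correcting: your reading of $A_t$ is wrong. In this paper $A$ is \emph{not} a drift characteristic---the process is already centred, so the log-MGF of an increment is exactly
\[
\tfrac12\lambda^2(C_t-C_s)+\int_s^t\!\!\int(e^{\lambda z}-1-\lambda z)\,\nu(dz,dr),
\]
with no linear term. Here $A$ is the clock in the disintegration $\nu(dz,dr)=K_r(dz)\,dA_r$, and condition~(\ref{add-neq19}) serves to kill the \emph{cubic} Taylor remainder: under (C2) one bounds $\int_s^t\!\!\int|z|^3 e^{|\lambda||z|}\,K_r(dz)\,dA_r\le c\,|A_t-A_s|$, and then $|\lambda|^3|A_t|\cdot t^{-\gamma}\sim (S(t)/t^\gamma)^2\cdot |A_t|S(t)/t^{2\gamma}\to 0$. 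So the sentence ``(\ref{add-neq19}) kills the drift term $\lambda\,\Delta A$'' should be replaced by ``(\ref{add-neq19}) kills the third-order remainder in the jump integral''. This does not affect your overall plan, but as written your log-MGF formula is incorrect.
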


In particular, if $(X_t)_{t \geq 0}$ is a mean-zero L\'evy process such that $\mbb{E}e^{\lambda |X_1|}<\infty$ for some $\lambda>0$, then condition \ref{C2} holds with $G=\nu$ where $\nu$ is the L\'evy measure of $(X_t)_{t \geq 0}$, cf.\ \cite[Theorem 25.17]{sato}.

\begin{thm}[Strassen's law] \label{add-n03}
	Let $(X_t)_{t \geq 0}$ be an additive process and $S(t)= \sqrt{2t^{\gamma} \log \log (t \vee e^e)}$ for some $\gamma>0$. If $(X(t \lcdot)/S(t))_{t \geq 0}$ satisfies a moderate deviation principle in $(D[0,1],\|\cdot\|_{\infty})$  with speed $S(t)^2/t^{\gamma}$ and good rate function \begin{equation}
		I(f) = \begin{cases} \displaystyle\frac{1}{2 \sigma^2 \gamma} \int_0^1 \frac{f'(s)^2}{s^{\gamma-1}} \, ds, & f \in \ac,\: f(0)=0, \\ \displaystyle\infty, & \text{otherwise}, \end{cases} \label{add-neq23}
	\end{equation}
	for some $\sigma>0$, then \begin{equation*}
		\left\{ \frac{X(t \lcdot)}{\sqrt{2 t^{\gamma}\log \log (t \vee e^e)}}; t > 0\right\}
	\end{equation*}
	is a.\,s.\ relatively compact in $(D[0,1],\|\cdot\|_{\infty})$ as $t \to \infty$, and the set of limit points $\mc{L}(\omega)$ (as $t \to \infty$) is for almost all $\omega \in \Omega$ given by the sublevel set $\Phi(\frac{1}{2})=\{f \in D[0,1]; I(f) \leq \frac{1}{2}\}$ of the good rate function $I$.
\end{thm}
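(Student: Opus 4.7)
The plan is to prove the two inclusions $\mc{L}(\omega) \subseteq \Phi(\tfrac{1}{2})$ (the ``upper bound'', which together with relative compactness is one direction) and $\Phi(\tfrac{1}{2}) \subseteq \mc{L}(\omega)$ (the ``lower bound'') separately, both almost surely, by combining the MDP with the Borel--Cantelli lemmas along geometric subsequences $t_n = \theta^n$. Observe that by the choice of $S$, the speed is exactly $a_t := S(t)^2/t^{\gamma} = 2 \log \log (t \vee e^{e})$, so $a_{t_n} \sim 2\log n$ as $n \to \infty$.

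For the upper bound, fix $\delta > 0$ and set $F_\delta := \{f \in D[0,1];\, d(f, \Phi(\tfrac{1}{2})) \geq \delta\}$, which is closed. Because $I$ is a good rate function and $\Phi(\tfrac{1}{2})$ is compact, lower semicontinuity forces $\kappa := \inf_{F_\delta} I > \tfrac{1}{2}$: any sequence $f_k \in F_\delta$ with $I(f_k) \to \tfrac{1}{2}$ would cluster in $\Phi(\tfrac{1}{2})$, contradicting $d(f_k,\Phi(\tfrac{1}{2})) \geq \delta$. The MDP upper bound then gives $\mbb{P}(X(t_n\lcdot)/S(t_n) \in F_\delta) \leq (n \log\theta)^{-2\kappa'}$ for any $\kappa' \in (\tfrac{1}{2},\kappa)$ and $n$ large, which is summable. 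Borel--Cantelli yields $d(X(t_n\lcdot)/S(t_n),\Phi(\tfrac{1}{2})) < \delta$ for all $n$ large a.s. To interpolate to $t \in [t_n, t_{n+1}]$, use $X(ts)/S(t) = \beta_t \bigl(X(t_{n+1}\lcdot)/S(t_{n+1})\bigr)(\alpha_t s)$ with $\alpha_t = t/t_{n+1}$, $\beta_t = S(t_{n+1})/S(t)$, together with the scaling identity $I(\beta f(\alpha\lcdot)) \leq \alpha^\gamma \beta^2 I(f)$ (immediate from \eqref{add-neq23} by substitution $u = \alpha s$) and the computation $\alpha_t^\gamma \beta_t^2 = \log\log t_{n+1}/\log\log t \to 1$ uniformly in $t \in [t_n, t_{n+1}]$. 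This shows $X(t\lcdot)/S(t)$ lies within $\theta^{\gamma/2}\delta$ of $\Phi(\tfrac{1}{2}(1+o(1)))$, and Hausdorff continuity of the nested compact sublevel sets $\Phi(c) \downarrow \Phi(\tfrac{1}{2})$ as $c \downarrow \tfrac{1}{2}$ then yields $\limsup_t d(X(t\lcdot)/S(t),\Phi(\tfrac{1}{2})) \leq \theta^{\gamma/2}\delta$; sending $\delta \downarrow 0$ and $\theta \downarrow 1$ completes the upper bound.

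For the lower bound, the set of limit points is always closed, and $I(\alpha f) = \alpha^2 I(f) < \tfrac{1}{2}$ for $\alpha \in (0,1)$ and $f \in \Phi(\tfrac{1}{2})$, so by $\alpha \uparrow 1$ it suffices to show each fixed $f$ with $I(f) < \tfrac{1}{2}$ is a.s.\ a limit point, and then to pass to a countable dense subset of $\Phi(\tfrac{1}{2})$. For such $f$ and $\eps > 0$, choose $\theta > 1$ so that $2(I(f)+\eta) < 1$ for some small $\eta > 0$ and define the \emph{independent} blocks
\[
    \tilde V_n(s) := \frac{X(t_n s) - X(t_{n-1})}{S(t_n)} \quad (s \in [t_{n-1}/t_n, 1]), \qquad \tilde V_n(s) := 0 \quad (s \in [0, t_{n-1}/t_n]),
\]
which are mutually independent by the independent-increments property. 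Apply the MDP lower bound to open sets of the form $\{g \in D[0,1];\, \sup_{s \geq t_{n-1}/t_n}|g(s)-f(s)| < \eps/4\}$ and couple to $\tilde V_n$ via the decomposition $X(t_n s)/S(t_n) = \tilde V_n(s) + X(t_{n-1})/S(t_n)$ together with the already-proved upper bound to conclude that $\mbb{P}(\|\tilde V_n - f\|_\infty < \eps/2) \geq \exp(-a_{t_n}(I(f) + 2\eta))$ for all $n$ large. Since $\sum_n (n\log\theta)^{-2(I(f)+2\eta)} = \infty$ by the choice of $\theta$, the second Borel--Cantelli lemma yields $\|\tilde V_n - f\|_\infty < \eps/2$ infinitely often a.s. Finally, $\|X(t_n\lcdot)/S(t_n) - \tilde V_n\|_\infty \leq |X(t_{n-1})|/S(t_n) + \sup_{s\leq t_{n-1}/t_n}|X(t_n s)/S(t_n) - f(s)|$, and both terms vanish almost surely (the first by applying the upper bound to $X(t_{n-1})/S(t_{n-1})$ and the decay $S(t_{n-1})/S(t_n) \to \theta^{-\gamma/2}$, refined by passing to a sub-subsequence if necessary; the second by $t_{n-1}/t_n \to 0$, continuity of $f$ at $0$, and the upper bound for $X$). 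Thus $d(X(t_n\lcdot)/S(t_n), f) < \eps$ infinitely often a.s., i.e., $f \in \mc{L}$.

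The main obstacle is the lower bound coupling: the MDP provides a lower bound only for the full scaled process $X(t_n\lcdot)/S(t_n)$, not directly for the independent blocks $\tilde V_n$, so the transfer of the probability estimate from one to the other (absorbing the shift $X(t_{n-1})/S(t_n)$ and the small-$s$ contribution) must be executed carefully, ideally by splitting the Borel--Cantelli event into the truly independent piece and a correction that vanishes almost surely thanks to the upper bound. The upper bound's interpolation step is less delicate but genuinely relies on the special scaling invariance of the rate function \eqref{add-neq23}.
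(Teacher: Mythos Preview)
Your overall strategy coincides with the paper's: both inclusions are obtained via Borel--Cantelli along a geometric subsequence $t_n=\theta^n$ (the paper writes $q^n$), the lower bound is reduced to $I(f)<\tfrac12$, and independence of increments drives the second Borel--Cantelli lemma. The execution differs in two places.

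\emph{Upper bound, interpolation step.} Here your argument is genuinely different and more elegant. The paper fills the gaps between $t_n$ and $t_{n+1}$ by a second probabilistic estimate: it bounds $\sup_{q^{n-1}\le t\le q^n}\|Z_t-Z_{q^n}\|_\infty$ via the MDP upper bound applied to the closed sets of Lemma~\ref{add-n17} (specifically the bound $\inf_A I\ge \tfrac{c^2}{2}\tfrac{q^\gamma}{q^\gamma-1}$), followed by another Borel--Cantelli. Your route is deterministic once the subsequence is controlled: the identity $X(t\lcdot)/S(t)=\beta_t\bigl(X(t_{n+1}\lcdot)/S(t_{n+1})\bigr)(\alpha_t\lcdot)$ together with the exact scaling $I(\beta f(\alpha\lcdot))\le \alpha^\gamma\beta^2 I(f)$ and $\alpha_t^\gamma\beta_t^2=\log\log t_{n+1}/\log\log t\to 1$ pushes the subsequence conclusion to all $t$ with no further use of the MDP. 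This exploits the specific form \eqref{add-neq23} more directly and avoids Lemma~\ref{add-n17} in the upper half.

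\emph{Lower bound.} Your scheme works but carries an avoidable complication and a slip. The slip: you invoke ``$t_{n-1}/t_n\to 0$'', but this ratio is $\theta^{-1}$ and is \emph{fixed}; what you actually need (and what the paper does) is to choose $\theta$ large so that $\sup_{s\le 1/\theta}|f(s)|<\eps/4$ and $S(t_{n-1})/S(t_n)\sim\theta^{-\gamma/2}$ is small. The complication: with your open set $U=\{g:\sup_{s\ge 1/\theta}|g(s)-f(s)|<\eps/4\}$ the events $\{X(t_n\lcdot)/S(t_n)\in U\}$ are \emph{not} independent in $n$ (they share $X(t_{n-1})$), which forces the transfer to $\tilde V_n$ by subtracting $\mbb{P}(|X(t_{n-1})|/S(t_n)\ge\eps/4)$. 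The paper sidesteps this by taking instead
\[
A=\Bigl\{g\in D[0,1]\,;\;\sup_{s\ge \theta^{-1}}\bigl|g(s)-g(\theta^{-1})-f(s)\bigr|<\tfrac{\eps}{4}\Bigr\},
\]
so that $\{X(t_n\lcdot)/S(t_n)\in A\}$ depends only on increments over $[t_{n-1},t_n]$ and is directly independent in $n$; since $|f(\theta^{-1})|<\eps/4$ one still has $f\in A$, and the MDP lower bound plus the second Borel--Cantelli lemma apply with no correction term.
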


By a standard argument we can replace $t^{\gamma}$ by a regulary varying function $h(t)$ of index $\gamma>0$, cf.\ Section~\ref{s-rem} at the end of the paper. \par
Note that the \emph{law of the iterated logarithm} is a simple consequence of Strassen's law: \begin{equation*}
	\limsup_{t \to \infty} \frac{X_t}{\sqrt{2 t^{\gamma} \log \log t}} = \sigma \als
\end{equation*}
Moreover, if $(X_t)_{t \geq 0}$ is an additive process such that $\var X_t/t^{\gamma} \xrightarrow[]{t \to \infty} \sigma^2>0$ and one of the growth conditions \ref{C1} or \ref{C2} holds, Theorem~\ref{add-n01} shows that Theorem~\ref{add-n03} is applicable.

\begin{kor}\label{add-n05}
	Let $(X_t)_{t \geq 0}$ be an additive process such that $\var X_t/t \xrightarrow[]{t \to \infty} \sigma^2>0$. Suppose that there exists a finite measure $G$ on $(\mbb{R},\mc{B}(\mbb{R}))$ such that \begin{equation*}
		K_t(B(0,r)^c) \leq G(B(0,r)^c) \fa r \geq 1, t \geq 0 \quad \text{and} \quad \int_{\mbb{R}} z^2 \, G(dz)< \infty.
	\end{equation*}
	Then $(X_t)_{t \geq 0}$ satisfies Strassen's law (with $\gamma=1$). In particular, Strassen's law holds for any square-integrable L\'evy process $(X_t)_{t \geq 0}$.
\end{kor}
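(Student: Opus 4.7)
The plan is to apply Theorems~\ref{add-n01} and~\ref{add-n03} after truncating the large jumps of $X$; since the finite second moment assumption on $G$ is far weaker than the exponential tail needed for~\ref{C2}, that route is closed, and we shall verify~\ref{C1} for a truncated process and control the truncation error separately.

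For $R\geq 1$, decompose $X = X^R + Y^R$, where
\[
    X^R_t := X^c_t + \int_0^t\!\!\int_{|z|\leq R} z\,(N(dz,dr)-\nu(dz,dr))
\]
keeps the continuous part together with the compensated small jumps, and $Y^R$ is the compensated large-jump part; these are orthogonal square-integrable martingales. A layer-cake argument based on $K_r(B(0,u)^c)\leq G(B(0,u)^c)$ for $u\geq 1$ gives
\[
    \int_{|z|>R} z^2 K_r(dz) \leq \int_{|z|>R} z^2 G(dz) =: \eta(R) \xrightarrow[]{R\to\infty} 0,
\]
and in particular $\int z^2 K_r(dz)$ is uniformly bounded; together with $\var X_t\sim\sigma^2 t$ this forces $A_t = O(t)$ and $\var X^R_t/t\to\sigma_R^2$ with $\sigma_R\uparrow\sigma$. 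Since $[\nu^R]_{0,t}\leq R$ and $S(t)/t\to 0$, condition~\ref{C1} holds for $X^R$. Theorem~\ref{add-n01} then yields the MDP for $X^R$ (with $\gamma=1$), and Theorem~\ref{add-n03} gives Strassen's law for $X^R$ with limit set $\Phi_R(\tfrac12)=\{f\in\ac:\,f(0)=0,\;\int_0^1 f'(s)^2\,ds\leq\sigma_R^2\}$.

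The second step is to show $\limsup_{t\to\infty} \|Y^R(t\lcdot)\|_\infty/S(t)\leq\psi(R)$ a.s.\ for some $\psi(R)\downarrow 0$. In the L\'evy case asserted at the end of the corollary, $Y^R$ is itself a centred L\'evy process with $\var Y^R_1\leq\eta(R)$, and the classical Hartman--Wintner LIL combined with Doob's maximal inequality along dyadic times $t_n=2^n$ delivers this with $\psi(R)=c\sqrt{\eta(R)}$. In the general additive-process setting the bare Chebyshev--Doob estimate is only of order $1/\log\log t_n$ and thus not Borel--Cantelli summable; this is overcome by a secondary truncation $Y^R = Y^{R,R'_t}+\widetilde Y^{R'_t}$ with $R'_t\to\infty$ slowly, controlling the bounded-jump piece $Y^{R,R'_t}$ by a Bernstein-type exponential inequality and the residual $\widetilde Y^{R'_t}$ by a union bound on the occurrence of a single jump larger than $R'_t$ on $[0,t]$, using the second moment of $G$.

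Writing $X(t\lcdot)/S(t) = X^R(t\lcdot)/S(t) + Y^R(t\lcdot)/S(t)$ and combining the relative compactness of the first summand with the a.s.\ smallness of the second gives relative compactness of $\{X(t\lcdot)/S(t)\}_{t>0}$ in $(D[0,1],\|\cdot\|_\infty)$, with the limit set contained in the closed $\psi(R)$-neighbourhood of $\Phi_R(\tfrac12)$ and containing $\Phi_R(\tfrac12)$ up to a $\psi(R)$-perturbation. Since $\sigma_R\to\sigma$ and $\psi(R)\to 0$ as $R\to\infty$, this pins the limit set down to $\Phi(\tfrac12)$, the sublevel set of the rate function~$I$ from Theorem~\ref{add-n03}. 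I expect the main obstacle to be the a.s.\ control of $\|Y^R(t\lcdot)\|_\infty/S(t)$ under only a finite second moment of $G$: in the L\'evy case this reduces to the classical LIL, whereas the general additive case genuinely requires the secondary truncation combined with Bernstein-type concentration.
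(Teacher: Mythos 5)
Your overall strategy (truncate the jumps, apply Theorems~\ref{add-n01} and~\ref{add-n03} to the truncated process, control the remainder) is the right one, but the \emph{fixed-level} truncation $X=X^R+Y^R$ does not work at the level of generality of the corollary, and the specific claims you base it on are false. First, ``$\var X_t\sim\sigma^2t$ together with $\sup_r\int z^2\,K_r(dz)<\infty$ forces $A_t=O(t)$'' is backwards: an upper bound on $\int z^2K_r(dz)$ only bounds $\var X_t$ \emph{from above} by a multiple of $C_t+A_t$, so $A_t$ may grow much faster than $t$. Second, and fatally, $\var X^R_t/t$ need not converge, and even when it does the limit need not tend to $\sigma^2$ as $R\to\infty$. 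Concretely, take $A_r=r^2$ and $K_r=\tfrac{1}{2}r^{-3/2}\,\delta_{r^{1/4}}$, i.e.\ $\nu(dz,dr)=r^{-1}\delta_{r^{1/4}}(dz)\,dr$, plus a Brownian part $C_t=(\sigma^2-1)t$. Then $K_r(B(0,u)^c)\le \tfrac12 u^{-6}$, so the hypotheses hold with $G(dz)=3|z|^{-7}\I_{\{|z|>1\}}dz$ (finite, with finite second moment), and $\var X_t=\sigma^2 t-1$. Yet for every fixed $R$ one has $\int_0^t\int_{|z|>R}z^2\,\nu(dz,dr)=(t-R^4)^+$, so $\var X^R_t/t\to\sigma^2-1$ for \emph{all} $R$: your $\sigma_R$ does not increase to $\sigma$, and $Y^R$ carries a non-vanishing fraction of the variance no matter how large $R$ is, so $\limsup_t\|Y^R(t\lcdot)\|_\infty/S(t)$ cannot be made small. (A variant with jump activity concentrated in bursts of time shows that even when $A_t=O(t)$, the limit defining $\sigma_R^2$ may simply fail to exist, so hypothesis \eqref{add-neq11} of Theorem~\ref{add-n01} is not available for $X^R$.) Your argument is sound for the L\'evy case, where $A_t=t$ and $K_r=\nu_L$ is constant, so the ``in particular'' clause survives; the general additive case does not.

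The fix — and what the paper (following Wang \cite{wang}, cf.\ Example~\ref{add-n065}) actually does — is a single \emph{time-dependent} truncation: set $Y_t:=X^c_t+\int_0^t\!\!\int z\,\I_{\{|z|\le\alpha(s)\}}\,(N(dz,ds)-\nu(dz,ds))$ with $\alpha(s)\uparrow\infty$ chosen so that $\alpha(t)S(t)/t\to0$. Then $[\nu^Y]_{0,t}\le\alpha(t)$ gives \ref{C1} for $Y$, while $\var X_t-\var Y_t=\int_0^t\int_{|z|>\alpha(s)}z^2K_s(dz)\,dA_s\le\int_0^t\eta(\alpha(s))\,dA_s$ with $\eta(\alpha(s))\to0$, which is what makes $\var Y_t/t\to\sigma^2$ with the \emph{same} $\sigma^2$ (note that in the counterexample above, any admissible $\alpha$ eventually exceeds the jump heights $r^{1/4}$, so $Y=X$ and nothing is lost). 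The remainder $\|X(t\lcdot)-Y(t\lcdot)\|_\infty/S(t)\to0$ a.s.\ is then proved once, by the Borel--Cantelli/compensator estimate using $\int z^2\,G(dz)<\infty$, and there is no double limit in $R$ and no Hausdorff-approximation argument for the limit sets. If you want to salvage your route, you must let the truncation level grow with time; the Bernstein-plus-secondary-truncation control of the remainder that you sketch is essentially the right tool for that step, but it should be applied to $X-Y$ rather than to a fixed-level tail $Y^R$.
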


We close this section with two examples which we state for the more general case of regulary varying functions, see Section~\ref{s-rem} and the remark before Corollary~\ref{add-n05}.

\begin{bsp} \label{add-n065}
	Let $(L_t)_{t \geq 0}$ be a pure-jump L\'evy process,  \begin{equation*}
		L_t = \int_0^t \!\!\! \int z \, (N(dz,ds)-\nu_L(dz) \, ds),
	\end{equation*}
	such that $\var L_1=\sigma^2<\infty$. For a non-decreasing function $\alpha:[0,\infty) \to [0,\infty)$, $\alpha(t) \xrightarrow[]{t \to \infty} \infty$, we define a truncated L\'evy process $(X_t)_{t \geq 0}$ by \begin{equation*}
		X_t := \int_0^t \!\!\! \int z \I_{\{|z| \leq \alpha(s)\}} \, (N(dz,ds)-\nu_L(dz) \, ds).
	\end{equation*}
	From \begin{equation*} 
		0 \leq \var L_t-\var X_t = \int_0^t \!\!\! \int z^2 \I_{\{|z| >\alpha(s)\}} \, \nu_L(dz) \, ds
	\end{equation*}
	it follows easily that \begin{equation*}
		\lim_{t \to \infty} \frac{\var X_t}{t} = \lim_{t \to \infty} \frac{\var L_t}{t} = \sigma^2.
	\end{equation*}
	By Corollary~\ref{add-n05}, $(X_t)_{t \geq 0}$ satisfies Strassen's law (with $\gamma=1$). Let us remark that truncated processes of this form are used in \cite{wang} to prove Corollary~\ref{add-n05}.
\end{bsp}

\begin{bsp} \label{add-n06} 
	Let $(L_t)_{t \geq 0}$ be a L\'evy process with bounded jumps such that $\mathbb{E}L_1^2 = \sigma_L^2 < \infty$ and $\mathbb{E}L_1=0$. Denote by $\psi$ its characteristic exponent and $\nu_L$ its L\'evy measure. For a regulary varying function $\alpha:[0,\infty) \to [0,\infty)$ of index $\gamma>0$, the additive process $X_t := \int_0^t \alpha(s) \, dL_s$ satisfies a moderate deviation principle with speed $S(t)^2/(t \alpha(t)^2)$ and good rate function \begin{equation*}
		I(f) = \begin{cases}\displaystyle \frac{2\gamma+1}{2\gamma \sigma_L^2} \int_0^1 \frac{f'(s)^2}{s^2 \alpha(s)^2} \,ds, & f \in \ac, f(0)= 0, \\ \infty, & \text{otherwise}, \end{cases}
	\end{equation*}
	for any scaling function $S:(0,\infty) \to (0,\infty)$ such that \eqref{add-neq15} as well as the growth conditions $S(t)/\sqrt{t\alpha(t)} \xrightarrow[]{t \to \infty} \infty$ and $S(t)/(t \alpha(t)) \xrightarrow[]{t \to \infty}0$ hold. Moreover, the set
	\begin{equation*}
		\left\{ \frac{X(t \lcdot)}{\sqrt{2 t \alpha(t)^2 \log \log (t \vee e^e)}}; t > 0\right\}
	\end{equation*}
	is a.\,s.\ relatively compact in $(D[0,1],\|\cdot\|_{\infty})$ as $t \to \infty$, and the set of limit points (as $t \to \infty$) is for almost all $\omega \in \Omega$ given by the sublevel set $\Phi(\frac{1}{2})$. \emph{Indeed:} Using that $(L_t)_{t \geq 0}$ is a martingale with independent increments, it is not difficult to see that \begin{equation*}
		\var X_t = \sigma_L^2 \int_0^t \alpha(s)^2 \, ds \quad \Longrightarrow \quad \frac{\var X_t}{t \alpha(t)^2} \xrightarrow[]{t \to \infty} \frac{\sigma_L^2}{2\gamma+1}.
	\end{equation*}
	The approximation \begin{equation*}
		X_t = \int_0^t \alpha(s) \, dL_s \approx \sum_{j=1}^n \alpha(s_j) (L_{s_j}-L_{s_{j-1}})
	\end{equation*}
	shows that the characteristic function of $X_t$ equals $\exp\left(-\int_0^t \psi(\alpha(s) \xi) \, ds \right)$. In particular, \begin{equation*}
		\nu(dz,ds) \stackrel{\eqref{add-neq05}}{=} K_s(dz) \, dA_s = \nu_L\left(\frac{1}{\alpha(s)} dz \right) \, ds.
	\end{equation*}
	Note that due to the boundedness of the jumps of $(L_t)_{t \geq 0}$, condition \ref{C1} is satisfied. Therefore, the claim follows from Theorem~\ref{add-n01} and Theorem~\ref{add-n03}. 
\end{bsp}

\section{Proofs} \label{s-proof}
	We start with the proof of the moderate deviation principle, Theorem \ref{add-n01}, and split the proof into several steps: \begin{enumerate}
		\item The sequence of discretizations $(Z_n/S(n))_{n \in \mbb{N}}$ defined by\begin{equation*}
			\frac{Z_n(s,\omega)}{S(n)}
			:= \frac{1}{S(n)} X(\floor{n s},\omega)
			= \frac{1}{S(n)} \left(\sum_{j=0}^{n-1} X(j,\omega) \I_{[j/n,(j+1)/n)}(s) +  X(n,\omega) \I_{\{1\}}(s) \right)
		\end{equation*}
		is exponentially tight in $(D[0,1],\|\cdot\|_{\infty})$, cf.\ Lemma~\ref{add-n09}.
		\item $(Z_n/S(n))_{n \in \mbb{N}}$ satisfies a moderate deviation principle in $(D[0,1],\|\cdot\|_{\infty})$ with good rate function $J$, \begin{equation}
			J(f) := \sup_{\alpha \in \bv \cap D[0,1]} \left( \int_0^1 f \, d\alpha - \frac{\gamma \sigma^2}{2} \int_0^1 s^{\gamma-1} (\alpha(1)-\alpha(s))^2 \, ds \right), \label{add-neq31}
		\end{equation}
		and speed $a_n = S(n)^2/n^{\gamma}$, cf.\ Theorem~\ref{add-n11}; as usual, $\bv$ denotes the set of functions $\alpha:[0,1] \to \mbb{R}$ of bounded variation.
		\item $(Z_{\floor{t}}/S(\floor{t}))_{t > 0}$ and $(X(t\lcdot)/S(t))_{t >0}$ are exponentially equivalent, cf.\ Lemma~\ref{add-n13}.
		\item The good rate function $J$ equals $I$ defined in \eqref{add-neq21}, cf.\ Theorem~\ref{add-n15}.
	\end{enumerate}
	We essentially follow the lines of de Acosta \cite{acosta-lp}. For the readers' convenience, we include the proofs of (i)-(iv). The next lemma provides an estimate for the exponential moments of $X_t-X_s$.

\begin{lem} \label{add-n07}
	Let $(X_t)_{t \geq 0}$ be as in Theorem~\ref{add-n01}. Then \begin{equation}
		\mbb{E}e^{\lambda (X_t-X_s)} \leq \exp \left( \frac{1}{2} \lambda^2 \var(X_t-X_s) + |\lambda|^3 E_{s,t}\right) \label{add-neq33}
	\end{equation}
	for any $|\lambda| \leq \lambda_0$ and $s \leq t$ where  \begin{equation*}
		E_{s,t} := E_{s,t}(\lambda):=\begin{cases} \frac{1}{6} [\nu]_{s,t} \var(X_t-X_s) e^{|\lambda| [\nu]_{s,t}} , & \text{if \ref{C1} holds}, \\ \frac{1}{6}  |A_t-A_s| \cdot \left(e^{\lambda_0}+\int_{|z|>1} e^{\lambda_0 |z|} |z|^3 \, G(dz) \right), & \text{if \ref{C2} holds}. \end{cases}
	\end{equation*}
	In particular, \begin{equation}
		\lim_{t \to \infty} \frac{S(t)}{t^{2\gamma}} E_{0,t} \left( r \frac{S(t)}{t^\gamma} \right) = 0 \fa r \geq 0. \label{add-neq35}
	\end{equation}
\end{lem}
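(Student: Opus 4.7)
The plan is to identify the log-moment generating function of the increment $X_t - X_s$ from its L\'evy-It\^o data and then control the cubic Taylor remainder case by case. Since the process has independent, mean-zero increments with characteristics $(C, A, K)$,
\begin{equation*}
    \log \mbb{E} e^{\lambda (X_t - X_s)} = \tfrac{1}{2}\lambda^2 (C_t - C_s) + \int_s^t \!\!\! \int \bigl(e^{\lambda z} - 1 - \lambda z\bigr) K_r(dz)\, dA_r
\end{equation*}
whenever the right-hand side is finite; under \ref{C1} this holds for every $\lambda$ because the jump heights on $[s,t]$ are bounded by $[\nu]_{s,t}$, and under \ref{C2} finiteness on $|\lambda| \leq \lambda_0$ follows from the exponential moment assumption on $G$ combined with the tail domination $K_r(B(0,\varrho)^c) \leq G(B(0,\varrho)^c)$. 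Using $\var(X_t - X_s) = (C_t - C_s) + \int_s^t \int z^2 K_r(dz)\, dA_r$ I subtract $\tfrac{1}{2}\lambda^2 \var(X_t - X_s)$ and am left with a residual controlled by the elementary bound $|e^u - 1 - u - \tfrac{u^2}{2}| \leq \tfrac{|u|^3}{6} e^{|u|}$, which reduces the task to estimating $\tfrac{|\lambda|^3}{6} \int_s^t \int |z|^3 e^{|\lambda z|} K_r(dz)\, dA_r$.

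Under \ref{C1} I use $|z|^3 \leq [\nu]_{s,t} z^2$ and $e^{|\lambda z|} \leq e^{|\lambda|[\nu]_{s,t}}$ on the jump support, together with $\int_s^t \int z^2 K_r(dz)\,dA_r \leq \var(X_t - X_s)$, yielding the first expression for $E_{s,t}$. Under \ref{C2} I split the inner integral at $|z|=1$: for $|z|\leq 1$, the bound $|z|^3 \leq z^2 \wedge 1$ combined with $\int (z^2 \wedge 1) K_r(dz) \leq 1$ and $|\lambda|\leq \lambda_0$ contributes at most $e^{\lambda_0}$; for $|z|>1$ a layer-cake argument driven by the tail domination gives $\int_{|z|>1} |z|^3 e^{\lambda_0|z|} K_r(dz) \leq \int_{|z|>1} |z|^3 e^{\lambda_0|z|} G(dz)$. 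Integrating in $r$ against $dA_r$ produces the prefactor $|A_t-A_s|$ and the second expression for $E_{s,t}$.

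For \eqref{add-neq35}, under \ref{C1}
\begin{equation*}
    \frac{S(t)}{t^{2\gamma}} E_{0,t}\!\left( r \frac{S(t)}{t^{\gamma}} \right) = \frac{1}{6} \cdot \frac{\var X_t}{t^{\gamma}} \cdot \left([\nu]_{0,t}\frac{S(t)}{t^{\gamma}}\right) \cdot \exp\!\left(r \, [\nu]_{0,t} \frac{S(t)}{t^{\gamma}}\right) \xrightarrow[]{t\to\infty} 0
\end{equation*}
by \eqref{add-neq11} and \eqref{add-neq17}; under \ref{C2}, $E_{0,t}$ is a constant multiple of $|A_t|$, so the convergence is exactly \eqref{add-neq19}.

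The main obstacle I foresee is the rigorous justification of the cumulant formula under \ref{C2}: the finiteness of $\mbb{E}e^{\lambda(X_t-X_s)}$ for $|\lambda|\leq\lambda_0$ and its equality with the claimed integral representation should be obtained by truncating large jumps, applying the standard exponential-martingale formula for the resulting bounded-jump additive process, and passing to the limit via dominated convergence, with the tail domination by $G$ providing uniform integrable majorants. All remaining steps are then routine manipulations with the Taylor remainder.
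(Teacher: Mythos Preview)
Your proposal is correct and follows essentially the same route as the paper's proof: identify the cumulant via the L\'evy--Khintchine representation, subtract the quadratic variance term, and bound the cubic Taylor remainder by $\tfrac{|\lambda|^3}{6}\int_s^t\!\int |z|^3 e^{|\lambda||z|}\,\nu(dz,dr)$, then treat the two cases exactly as you do (the paper likewise splits at $|z|=1$ under \ref{C2}, using $\int (z^2\wedge 1)\,K_r(dz)\leq 1$ and the tail domination). The only cosmetic difference is that the paper cites Fujiwara \cite{fuji} for the finiteness and the explicit form of $\mbb{E}e^{\lambda(X_t-X_s)}$, whereas you sketch a direct truncation argument for that step.
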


\begin{proof}
	It follows from the conditions \ref{C1} or \ref{C2}, respectively, that $\mathbb{E}e^{\lambda (X_t-X_s)}<\infty$ for any $s \leq t$, $|\lambda| \leq \lambda_0$, and that  \begin{equation*}
		\mbb{E}e^{\lambda (X_t-X_s)}
		= \exp \left( \frac{1}{2} (C_t-C_s) \lambda^2 + \int_s^t \!\!\! \int (e^{\lambda z}-1-\lambda z)\, \nu(dz,dr)\right),
	\end{equation*}
	cf.\ Fujiwara \cite{fuji}. Since \begin{equation*}
		\var(X_t-X_s) = (C_t-C_s)+ \int_s^t \!\!\! \int z^2 \, \nu(dz,dr)
	\end{equation*}
	Taylor's formula yields \begin{equation*}
		\mbb{E}e^{\lambda (X_t-X_s)}
		\leq \exp \left( \frac{\lambda^2}{2} \var(X_t-X_s) + \frac{|\lambda|^3}{6} \int_s^t \!\!\! \int |z|^3 e^{\lambda \xi} \, \nu(dz,dr) \right)
	\end{equation*}
	for some intermediate value $\xi=\xi(z) \in (0,z)$. From the definition of $[\nu]_{s,t}$ we get \begin{align*}
		\int_s^t \!\!\! \int |z|^3 e^{\lambda \xi} \, \nu(dz,dr)
		&\leq [\nu]_{s,t} \cdot e^{|\lambda| [\nu]_{s,t}} \int_s^t \!\!\! \int z^2 \, \nu(dz,dr)
		\leq [\nu]_{s,t} \var(X_t-X_s) e^{|\lambda| [\nu]_{s,t}} .
	\end{align*}
	This proves \eqref{add-neq33} if \ref{C1} holds. If \ref{C2} is satisfied, the claim follows from the estimate \begin{align*}
		\int_s^t \!\!\! \int |z|^3 e^{\lambda \xi} \, \nu(dz,dr)
		&\leq \int_s^t \!\!\! \int |z|^3 e^{|\lambda| |z|} \, K_r(dz) \, dA_r \\
		&\leq \left(e^{\lambda_0}+ \int_{|z| > 1} |z|^3 e^{\lambda_0 |z|} \, G(dz) \right) |A_t-A_s|.
	\end{align*}
	\eqref{add-neq35} is a direct consequence of the definition of $E_{0,t}$ and the assumptions in \ref{C1} and \ref{C2}, respectively.
\end{proof}

In order to show that the approximations $(Z_n)_{n \in \mbb{N}}$ satisfy a moderate deviation principle, we need the following lemma.

\begin{lem} \label{add-n09}
	For each $n \in \mbb{N}$, $Z_n/S(n)$ is tight in $(D[0,1],\|\cdot\|_{\infty})$. Morover, $(Z_n/S(n))_{n \in \mbb{N}}$ is exponentially tight, i.\,e.\ for any $R \geq 0$ there exists a compact set $K \subseteq D[0,1]$ such that \begin{equation*}
		\limsup_{n \to \infty} \frac{n^{\gamma}}{S(n)^2} \log \mbb{P}\left( \frac{Z_n}{S(n)} \notin K \right) \leq -R.
	\end{equation*} \end{lem}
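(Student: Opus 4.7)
For the first assertion, I note that, for each fixed $n$, the random element $Z_n/S(n)$ takes values in the $(n+1)$-dimensional subspace $V_n \subseteq (D[0,1],\|\cdot\|_\infty)$ spanned by the indicators $\I_{[j/n,(j+1)/n)}$ ($j=0,\ldots,n-1$) together with $\I_{\{1\}}$. Since every Borel probability measure on a finite-dimensional normed space is tight (closed balls being compact there), this gives the first claim at once.

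For exponential tightness I would carry out a dyadic modulus-of-continuity argument in the spirit of de Acosta~\cite{acosta-lp}. Given $R>0$, pick $M>0$ and a sequence $\eta_m \downarrow 0$ (both depending on $R$, to be tuned later) and set
\begin{equation*}
    K_R := \left\{ f \in D[0,1] : \|f\|_\infty \leq M,\; \sup_{s,t \in I_k^m} |f(t) - f(s)| \leq \eta_m \text{ for all } m \geq 1 \text{ and } 0 \leq k < 2^m \right\},
\end{equation*}
where $I_k^m := [k\, 2^{-m}, (k+1)\, 2^{-m}]$. The set $K_R$ is closed in $(D[0,1],\|\cdot\|_\infty)$ because uniform limits of \cadlag functions are \cadlag and the oscillation bounds are preserved under uniform convergence, and it is totally bounded because every $f \in K_R$ is uniformly within $\eta_m$ of its left-endpoint discretization on the level-$m$ dyadic partition, and those discretizations sit in a bounded subset of the $2^m$-dimensional subspace $V_{2^m}$; hence $K_R$ is compact.

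To estimate $\mbb{P}(Z_n/S(n) \notin K_R)$, I would apply Lemma~\ref{add-n07} with $\lambda = r\, S(n)/n^\gamma$ to the increment $X_j - X_k$ and use Markov's inequality; combining this with \eqref{add-neq11} to control $\var(X_j-X_k)/n^\gamma$ and with \eqref{add-neq35} to absorb the cubic correction, one obtains
\begin{equation*}
    \mbb{P}(|X_j - X_k| > \eta\, S(n)) \leq 2 \exp\!\left(-\frac{S(n)^2}{n^\gamma}\left[\eta r - \frac{r^2 \sigma^2}{2} \cdot \frac{(j-k)^\gamma}{n^\gamma}(1 + o_n(1)) - o_n(1) \right]\right)
\end{equation*}
for $0 \leq k < j \leq n$. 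A union bound over the $O((n/2^m)^2)$ pairs inside each $n I_k^m$, then over the $2^m$ dyadic intervals at level $m$, followed by optimizing $r=r(m)$, produces at each level a bound of the form $\exp(-c_m\, S(n)^2/n^\gamma)$ with $c_m \to \infty$. Summing over $m$ and adding a (simpler) bound for $\{\|Z_n/S(n)\|_\infty > M\}$ gives the required limsup $\leq -R$, provided $\eta_m$ and $M$ are chosen so that $\min_m c_m \geq R$.

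\emph{Main obstacle.} The main technical difficulty lies in the simultaneous tuning of $M$, $\eta_m$, and the Cram\'er parameter $r = r(m)$ so that the resulting exponential rate uniformly dominates $R$ across all dyadic scales, especially at the fine scales $2^m > n$ (where each $I_k^m$ contains at most one jump of $Z_n$, so the oscillation reduces to a single-increment estimate) and at the coarse scales where $\eta_m$ is only just small enough to outweigh the Gaussian quadratic term. One must also check that the $o_n(1)$ error arising from \eqref{add-neq35} can be absorbed uniformly across levels.
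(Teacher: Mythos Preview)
Your argument for tightness of each fixed $Z_n/S(n)$ is fine and is essentially the paper's observation that $Z_n/S(n)$ lives in a finite-dimensional image.

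The exponential tightness argument, however, has a genuine gap. Because you require $\eta_m\downarrow 0$, your set $K_R$ consists \emph{only of continuous functions}: if $f\in D[0,1]$ has a jump of size $\delta>0$ at some $t_0$, then for every $m$ the point $t_0$ lies in one of the closed dyadic intervals $I_k^m$, and the oscillation of $f$ there is at least $\delta$; hence $\delta\le\eta_m$ for all $m$, a contradiction. But $Z_n/S(n)$ is a step function whose jump at $j/n$ has size $|X_j-X_{j-1}|/S(n)$. For fixed $n$, the event $\{Z_n/S(n)\in K_R\}$ therefore forces $\max_{1\le j\le n}|X_j-X_{j-1}|\le \eta_m\,S(n)$ for \emph{every} $m$, i.e.\ $X_1=\cdots=X_n$, which for a nondegenerate additive process has probability bounded away from $1$ (typically $0$). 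Thus $\mbb{P}(Z_n/S(n)\notin K_R)\not\to 0$, let alone at an exponential rate, and the limsup you want is $0$, not $\le -R$. Your ``fine-scale'' discussion treats the level-$m$ constraint one $m$ at a time, but the intersection over $m$ is what kills the argument; no choice of $r(m)$ can repair this because the event itself is (almost) empty.

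The paper sidesteps this by not asking that $Z_n/S(n)$ \emph{lie in} a compact set, but only that it be $\eps$-close to one. It invokes the approximation criterion \cite[Lemma~3.3]{feng} with compacta $T_m([-r,r]^m)$ that are themselves step functions (so jumps are permitted), and then controls $\mbb{P}(d(Z_n/S(n),T_m([-r,r]^m))>\eps)$ via Etemadi's inequality and Lemma~\ref{add-n07}. If you want to keep a direct construction, you would need compact sets adapted to $D[0,1]$, e.g.\ defined through the \cadlag modulus $w'_f(\delta)$ rather than the ordinary oscillation, together with a uniform bound on the individual jump sizes $|X_j-X_{j-1}|/S(n)$; the latter \emph{is} controllable at the required exponential rate for any \emph{fixed} threshold $\eta>0$, which is exactly what the $\eps$-neighbourhood formulation exploits.
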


\begin{proof}
	Since the mapping \begin{equation*}
		(\mbb{R}^n,\|\cdot\|) \ni x \mapsto (T_n x)(t) := \sum_{j=1}^{n-1} x_j \I_{[j/n,(j+1)/n)}(t) + x_n \I_{\{1\}}(t) \in (D[0,1],\|\cdot\|_{\infty})
	\end{equation*}
	is continuous, it follows that $T_n(K)$ is compact for any compact set $K \subseteq \mbb{R}^n$. For $K \subseteq \mbb{R}$ compact and $K^n = K \times \ldots \times K \subseteq \mbb{R}^n$, we have \begin{equation*}
		\mbb{P} \left( \frac{Z_n}{S(n)} \notin T_n(K^n) \right) \leq \sum_{j=1}^n \mbb{P} \left( \frac{X_j}{S(n)} \notin K \right).
	\end{equation*}
	Since $X_j/S(n)$ is tight for $j=1,\ldots,n$, we conclude that $Z_n/S(n)$ is tight in $(D[0,1],\|\cdot\|_{\infty})$.  It remains to prove exponential tightness. To this end, we show that the assumptions of \cite[Lemma 3.3]{feng} are satisfied. Fix $r > 0$ and $\eps>0$. For $K \subseteq \mbb{R}$ and $n \geq m$, we have \begin{align}
		&\mbb{P} \left( d \left( \frac{Z_n}{S(n)}, T_m(K^m) \right) > \eps \right) \notag \\
		&\quad \leq \mbb{P}\left( \frac{Z_n}{S(n)} \notin T_n(K^n) \right) + \mbb{P} \left( \frac{Z_n}{S(n)} \in T_n(K^n), d \left( \frac{Z_n}{S(n)}, T_m(K^m) \right)>\eps \right)
		=: I_1+I_2 \label{add-neq37}
	\end{align}
	with $d(f,A) := \inf_{g \in A} \|f-g\|_{\infty}$, $A \subseteq D[0,1]$. We choose $K:=[-r,r]$ and estimate the terms separately. Applying Etemadi's inequality, Markov's inequality and Lemma~\ref{add-n07} yields \begin{align*}
		I_1
		= \mbb{P} \left( \max_{1 \leq j \leq n} \left|\frac{X_j}{S(n)}\right|>r \right)
		&\leq 3 \max_{1 \leq j \leq n} \mbb{P} \left( |X_j| > \frac{S(n) r}{3} \right)  \\
		&\leq 3 \exp \left(- \frac{S(n) \lambda r}{3} \right) \max_{1 \leq j \leq n} \bigg( \mbb{E}e^{\lambda X_j}+\mbb{E}e^{-\lambda X_j} \bigg) \\
		&\leq 6 \exp \left(- \frac{S(n) \lambda r}{3} \right) \exp \left( \frac{\lambda^2}{2} \var X_n + \lambda^3 E_{0,n}(\lambda) \right)
	\end{align*}
	for any $0 \leq \lambda \leq \lambda_0$. For $\lambda := S(n)/n^\gamma$, we have $\lambda \leq \lambda_0$ for $n$ sufficiently large, and we obtain \begin{equation*}
		I_1 \leq 6 \exp \left(- \frac{S(n)^2}{n^\gamma} \cdot \frac{r-3\sigma^2}{3} \right)
	\end{equation*}
	since \begin{equation*}
		\lim_{n \to \infty} \left(\frac{1}{2}\frac{\var X_n}{n^\gamma} +\frac{S(n)}{n^{2\gamma}} E_{0,n} \left( \frac{S(n)}{n^\gamma} \right) \right) = \frac{\sigma^2}{2},
	\end{equation*}
	cf.\ \eqref{add-neq11} and \eqref{add-neq35}. In order to estimate $I_2$ we observe that for $f_m := f(\floor{m \lcdot}/m)$ it holds that\begin{equation}
		d(f,T_m(K^m))
		\leq \|f-f_m\|_{\infty} \label{add-neq41} \fa f \in T_n(K^n).
	\end{equation}
	In abuse of notation, we write $x+y \wedge z := (x+y) \wedge z$. Then, \begin{align}
		\|f-f_m\|_{\infty}
		&= \adjustlimits\max_{0 \leq i \leq m-1} \sup_{t \in [i/m,(i+1)/m)} \left| f \left( \frac{\floor{n t}}{n} \right)- f \left( \frac{\floor{m t}}{m} \right) \right| \notag \\
		&\leq \adjustlimits\max_{0 \leq i \leq m-1} \max_{1 \leq j \leq \floor{n/m}+1} \left| f \left( \frac{\floor{n \frac{i}{m}}}{n}+\frac{j}{n} \wedge 1 \right)- f \left( \frac{\floor{n \frac{i}{m}}}{n} \right) \right|. \label{add-neq43}
	\end{align}
	For the last line we used that \begin{equation*}
		f \left( \frac{\floor{n \frac{i}{m}}}{n} \right) = f \left( \frac{i}{m} \right)= f \left( \frac{\floor{m t}}{m} \right) \fa t \in \bigg[ \frac{i}{m},\frac{i+1}{m} \bigg)
	\end{equation*}
	as $f \in T_n(K^n)$. Combining \eqref{add-neq41} and \eqref{add-neq43}, we get \begin{align*}
		I_2
		&\leq \mbb{P} \left\{ \adjustlimits\max_{0 \leq i \leq m-1} \max_{1 \leq j \leq \floor{n/m}+1} \left| Z_n \left( \frac{\floor{n \frac{i}{m}}}{n}+\frac{j}{n} \wedge 1 \right)- Z_n \left( \frac{\floor{n \frac{i}{m}}}{n} \right) \right| >\eps  S(n)\right\} \\
		&\leq 3\sum_{i=0}^{m-1} \max_{1 \leq j \leq \floor{n/m}+1} \mbb{P} \left\{ \left| X \left(\floor{n \frac{i}{m}}+j \wedge n \right)-X \left(\floor{n \frac{i}{m}}\right) \right| > \frac{\eps S(n)}{3} \right\} .
	\end{align*}
	By Lemma~\ref{add-n07}, \begin{align*}
		I_2
		& \leq 6 \exp \left(-\frac{S(n) \lambda \eps}{3}\right) \sum_{i=0}^{m-1} \exp \left(\frac{\lambda^2}{2}  \left[\var X\left(\floor{\frac{n i}{m}} + \floor{\frac{n}{m}}+1 \right)- \var X \left( \floor{\frac{n i}{m}} \right) \right]+ \lambda^3 E_{0,n}(\lambda) \right)
	\end{align*}
	for any $0 \leq \lambda \leq \lambda_0$. Writing \begin{equation*}
		\var X_s-\var X_r = s^\gamma \left( \left[\frac{\var X_s}{s^\gamma}-\sigma^2 \right]- \left[\frac{\var X_r}{r^\gamma}-\sigma^2 \right] \right) + \frac{\var X_r}{r^{\gamma}} (s^{\gamma}-r^{\gamma})
	\end{equation*}
	it is not difficult to see that \begin{align*}
		\frac{1}{n^\gamma} \left[\var X\left(\floor{\frac{n i}{m}} + \floor{\frac{n }{m}}+1 \right)- \var X \left( \floor{\frac{n i}{m}} \right) \right]
		&\leq c \sup_{k \geq \floor{\frac{n}{m}}} \left| \frac{\var X_k}{k^\gamma}-\sigma^2 \right| + c \left( \frac{1}{m}+ \frac{1}{n} \right)
		=: \delta(n)
	\end{align*}
	for some constant $c=c(\gamma)$. Note that the first term on the right-hand side converges to $0$ as $n \to \infty$. For $\lambda := r S(n)/n^{\gamma}$, $r \geq 1$, we find \begin{align}
		I_2
		&\leq 6m \exp \left[ \frac{S(n)^2}{n^\gamma} \left(- \frac{r \eps}{3} + \frac{r^2}{2} \delta(n) +  r^3 \frac{S(n)}{n^{2\gamma}} E_{0,n}\left( r\frac{S(n)}{n^\gamma} \right)\right) \right]. \label{add-neq45}
	\end{align}
	Consequently, by \eqref{add-neq35}, \eqref{add-neq37} and \eqref{add-neq45}, \begin{align*}
		\limsup_{n \to \infty} \frac{n^{\gamma}}{S(n)^2} \log \mbb{P} \left( d \left( \frac{Z_n}{S(n)}, T_m(K^m) \right) > \eps \right)
		&\leq \max \left\{\sigma^2-\frac{r}{3}, - \frac{r \eps}{3} + \frac{c  r^2}{2m} \right\} \\
		&\xrightarrow[]{r,m \to \infty} -\infty.
	\end{align*}
	By \cite[Lemma 3.3]{feng}, $(Z_n/S(n))_{n \in \mbb{N}}$ is exponentially tight in $(D[0,1],\|\cdot\|_{\infty})$.
\end{proof}

Now we are ready to prove that $(Z_n/S(n))_{n \in \mbb{N}}$ satisfies a moderate deviation principle.

\begin{thm} \label{add-n11}
	$(Z_n/S(n))_{n \in \mbb{N}}$ satisfies a moderate deviation principle in $(D[0,1],\|\cdot\|_{\infty})$ with speed $S(n)^2/n^{\gamma}$ and good rate function $J$ defined in \eqref{add-neq31}.
\end{thm}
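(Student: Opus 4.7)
The plan is to invoke the abstract Gärtner--Ellis (Baldi) theorem for exponentially tight families in topological vector spaces, with $(D[0,1],\|\cdot\|_\infty)$ paired with $\sbv[0,1]\cap D[0,1]$ via $\langle\alpha,f\rangle := \int_0^1 f\,d\alpha$. Since exponential tightness with speed $S(n)^2/n^\gamma$ is already supplied by Lemma~\ref{add-n09}, the task reduces to computing, for every $\alpha \in \sbv[0,1]\cap D[0,1]$,
\[
    \Lambda(\alpha) := \lim_{n\to\infty}\frac{n^\gamma}{S(n)^2}\log \mbb{E}\exp\!\left(\frac{S(n)}{n^\gamma}\int_0^1 Z_n\,d\alpha\right),
\]
and showing that it equals $\tfrac{\gamma\sigma^2}{2}\int_0^1 s^{\gamma-1}(\alpha(1)-\alpha(s))^2\,ds$, so that $J=\Lambda^*$ coincides with \eqref{add-neq31}.

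The MGF computation proceeds in three steps. First, an integration-by-parts (Abel summation) converts the Stieltjes integral into a sum over the independent increments $\Delta X_k := X_k - X_{k-1}$:
\[
    \int_0^1 Z_n\,d\alpha = \sum_{k=1}^n [\alpha(1)-\alpha((k/n)^-)]\,\Delta X_k,
\]
after which independence factorises the expectation. Second, Lemma~\ref{add-n07}, together with a matching lower bound obtained from the exact Taylor remainder $|e^x-1-x-x^2/2|\leq \tfrac{|x|^3}{6}e^{|x|}$ applied to the Lévy--Khintchine representation of $\log \mbb{E} e^{\lambda(X_t-X_s)}$, yields
\[
    \log\mbb{E}e^{\lambda\Delta X_k} = \tfrac12\lambda^2 \var(\Delta X_k) + O\!\bigl(|\lambda|^3 E_{k-1,k}(\lambda)\bigr)
\]
uniformly for $|\lambda|\leq\lambda_0$. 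The choice $\lambda = [\alpha(1)-\alpha((k/n)^-)]\,S(n)/n^\gamma$ is admissible for large $n$ since $\alpha$ is bounded and $S(n)/n^\gamma\to 0$, and the cubic error is controlled globally by \eqref{add-neq35}. Third, invoking $\var X_k - \var X_{k-1}\sim \sigma^2\gamma k^{\gamma-1}$, an Abel-summation consequence of \eqref{add-neq11}, the leading quadratic sum becomes a Riemann sum:
\[
    \frac{1}{n^\gamma}\sum_{k=1}^n [\alpha(1)-\alpha((k/n)^-)]^2\,\var(\Delta X_k) \xrightarrow[n\to\infty]{} \gamma\sigma^2\int_0^1 s^{\gamma-1}(\alpha(1)-\alpha(s))^2\,ds,
\]
which gives the desired formula for $\Lambda(\alpha)$.

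With $\Lambda$ identified as a non-negative quadratic form on the separating dual $\sbv[0,1]\cap D[0,1]$ and exponential tightness in hand, the Baldi-type Gärtner--Ellis theorem used by de Acosta \cite{acosta-lp} delivers the LDP with rate function $J = \Lambda^*$. The main obstacle is the lower bound: Baldi's theorem only supplies the lower bound at exposed points of $J$, and the non-separability of $(D[0,1],\|\cdot\|_\infty)$ together with the non-reflexivity of the dual pairing means this hypothesis is not automatic. Following \cite{acosta-lp}, I would exploit the quadratic structure of $\Lambda$ to approximate any $f\in\mathrm{dom}(J)$ in sup-norm by smoother paths for which the exposing $\alpha$ can be written down explicitly, thereby closing the lower bound. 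The minor nuisance caused by the (at most countably many) jumps of $\alpha$ in passing from $\alpha((k/n)^-)$ to $\alpha(k/n)$ in the Riemann-sum convergence is handled by a standard density argument using continuous $\alpha$.
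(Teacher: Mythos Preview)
Your overall strategy matches the paper's: factorise the moment generating functional through the independent increments, Taylor-expand the individual cumulants, identify the limit $\Lambda(\alpha)=\tfrac{\gamma\sigma^2}{2}\int_0^1 s^{\gamma-1}(\alpha(1)-\alpha(s))^2\,ds$, and feed this together with Lemma~\ref{add-n09} into the Gärtner--Ellis theorem of de Acosta \cite{acosta-lp}. Two points need correction.

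First, the step ``$\var X_k-\var X_{k-1}\sim\sigma^2\gamma k^{\gamma-1}$, an Abel-summation consequence of \eqref{add-neq11}'' is false as stated: \eqref{add-neq11} does \emph{not} control increments. For $\gamma=1$ take $\var X_t=t+\tfrac12\sin t$ (realised by a Gaussian additive process); then $\var X_t/t\to 1$ while $\var X_k-\var X_{k-1}=1+\tfrac12(\sin k-\sin(k-1))$ fails to converge. Abel summation is indeed the right tool, but it must be applied to the whole sum rather than used to extract an increment asymptotic: with $g(s)=(\alpha(1)-\alpha(s))^2$,
\[
\frac{1}{n^\gamma}\sum_{k=1}^n g(k/n)\bigl(\var X_k-\var X_{k-1}\bigr)
=-\sum_{k=1}^{n}\frac{\var X_k}{k^\gamma}\Bigl(\tfrac{k}{n}\Bigr)^{\gamma}\bigl(g((k{+}1)/n)-g(k/n)\bigr),
\]
and now $\var X_k/k^\gamma\to\sigma^2$ together with the Riemann--Stieltjes limit and an integration by parts yields $\tfrac{\gamma\sigma^2}{2}\int_0^1 s^{\gamma-1}g(s)\,ds$. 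This is precisely how the paper argues; your ``Riemann sum'' shortcut does not go through.

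Second, your handling of the lower bound is more elaborate than necessary. The limit $\Lambda$ is a finite quadratic form and hence everywhere Gâteaux differentiable on $\sbv[0,1]\cap D[0,1]$ (with derivative lying in $C[0,1]$). In de Acosta's framework \cite[Theorems~2.1 and~2.4]{acosta-lp}, exponential tightness plus Gâteaux differentiability of $\Lambda$ already yields the full LDP, upper and lower bounds alike; no separate exposed-point analysis or approximation of $f\in\dom J$ by smoother paths is required. The paper simply notes Gâteaux differentiability and invokes those theorems.
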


\begin{proof}
	For $\alpha \in \bv \cap D[0,1]$ we set \begin{equation*}
		\Lambda_n(\alpha):=\frac{n^\gamma}{S(n)^2} \log \mbb{E}\exp \left( \frac{S(n)}{n^\gamma} \int_0^1 Z_n(s) \, d\alpha(s) \right).
	\end{equation*}
	By definition, \begin{equation*}
		Z_n(s)
		= \sum_{j=0}^{n-1} X_j \I_{[j/n,(j+1)/n)}(s) + X_n \I_{\{1\}}(s)
		= \sum_{j=1}^n (X_j-X_{j-1}) \I_{[j/n,1]}(s).
	\end{equation*}
	It follows from the independence of the increments that \begin{align*}
		\Lambda_n(\alpha)
		&= \frac{n^\gamma}{S(n)^2} \log \mbb{E} \exp \left( \frac{S(n)}{n^\gamma} \sum_{j=1}^n (\alpha(1)-\alpha(j/n)) \, (X_j-X_{j-1}) \right) \\
		&= \frac{n^\gamma}{S(n)^2}  \sum_{j=1}^n \log \mbb{E} \exp \left( \frac{S(n)}{n^{\gamma}} (\alpha(1)-\alpha(j/n)) \, (X_j-X_{j-1}) \right) \\
		&= \frac{1}{2n^{\gamma}} \sum_{j=1}^n (C_j-C_{j-1}) (\alpha(1)-\alpha(j/n))^2 \\
		& + \frac{n^\gamma}{S(n)^2} \sum_{j=1}^n \int_{j-1}^j \!\! \int \bigg[ \exp \left( \frac{S(n)}{n^\gamma} (\alpha(1)-\alpha(j/n))z \right)-1-\frac{S(n)}{n^\gamma} (\alpha(1)-\alpha(j/n)) z \bigg] \, \nu(dz,dr).
	\end{align*}
	Applying Taylor's formula and using that $\var X_j = C_j + \int_0^j \!\! \int z^2 \, \nu(dz,dr)$, we get \begin{align*}
		\Lambda_n(\alpha)
		&= \frac{1}{2n^\gamma} \sum_{j=1}^n (\var X_j-\var X_{j-1}) (\alpha(1)-\alpha(j/n))^2 \\
		& \quad+ \frac{1}{6} \frac{S(n)}{n^{2\gamma}} \sum_{j=1}^n (\alpha(1)-\alpha(j/n))^3 \int_{j-1}^j \!\! \int \exp \left(\frac{S(n)}{n^\gamma}  (\alpha(1)-\alpha(j/n))\xi_j\right) z^3 \, \nu(dz,dr) \\
		&=: I_1(n)+I_2(n)
	\end{align*}
	for some intermediate value $\xi_j$ between $0$ and $z$. It follows from Abel's summation formula and \eqref{add-neq11} that \begin{align*}
		I_1(n)
		&= \frac{1}{2n^\gamma} \sum_{j=1}^n \var X_j \left( (\alpha(1)-\alpha(j/n))^2  - (\alpha(1)-\alpha(j+1/n))^2 \right) \\
		&= -\frac{1}{2} \sum_{j=1}^n \frac{\var X_j}{j^\gamma} \left(\frac{j}{n}\right)^{\gamma} \left( (\alpha(1)-\alpha((j+1)/n))^2  - (\alpha(1)-\alpha(j/n))^2 \right)  \\
		&\xrightarrow[]{n \to \infty} - \frac{\sigma^2}{2} \int_0^1 s^\gamma d\left( (\alpha(1)-\alpha(s))^2 \right).
	\end{align*}
	Note that this integral is well-defined as $\alpha \in \bv$ (hence $\alpha^2 \in \bv$). Applying integration by parts, we obtain \begin{equation*}
		\lim_{n \to \infty} I_1(n) = \frac{\gamma \sigma^2}{2} \int_0^1 s^{\gamma-1} (\alpha(1)-\alpha(s))^2 \, ds.
	\end{equation*}
	On the other hand, it is not difficult to see that $I_2(n) \xrightarrow[]{n \to \infty} 0$. Consequently,\begin{align*}
		\Lambda(\alpha) := \lim_{n \to \infty} \Lambda_n(\alpha) = \frac{\gamma \sigma^2}{2} \int_0^1 s^{\gamma-1} (\alpha(1)-\alpha(s))^2 \, ds, \qquad \alpha \in \bv \cap D[0,1].
	\end{align*}
	Obviously, $\Lambda$ is G\^ateaux differentiable (with G\^ateaux derivative in $C[0,1]$). Therefore, the claim follows from the G\"artner--Ellis theorem, see e.\,g.\ \cite[Theorem 2.1,Theorem 2.4]{acosta-lp}.
\end{proof}

In order to carry over the moderate deviation principle from $(Z_n/S(n))_{n \in \mbb{N}}$ to $(X(t \lcdot)/S(t))_{t>0}$, we need the following auxiliary result.

\begin{lem} \label{add-n13}
	$(Z_{\floor{t}}/S(\floor{t}))_{t>0}$ and $(X(t\lcdot)/S(t))_{t>0}$ are exponentially equivalent, i.\,e.\ \begin{equation*}
		\limsup_{t \to \infty} \frac{t^{\gamma}}{S(t)^2} \log \mbb{P} \left( \left\| \frac{Z_{\floor{t}}(\lcdot)}{S(\floor{t})} - \frac{X(t\lcdot)}{S(t)} \right\|_{\infty} > \eps \right)=-\infty \fa \eps>0.
	\end{equation*}
\end{lem}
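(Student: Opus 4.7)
The plan is to decompose
\begin{equation*}
\frac{Z_{\floor{t}}(s)}{S(\floor{t})} - \frac{X(ts)}{S(t)} = \underbrace{\frac{X(\floor{\floor{t}s}) - X(ts)}{S(\floor{t})}}_{=:A_t(s)} + \underbrace{X(ts)\left(\frac{1}{S(\floor{t})} - \frac{1}{S(t)}\right)}_{=:B_t(s)}
\end{equation*}
(using the identity $Z_{\floor{t}}(s)=X(\floor{\floor{t}s})$) and to show that both $\sup_s|A_t|$ and $\sup_s|B_t|$ are superexponentially small at speed $S(t)^2/t^\gamma$ by the same toolkit as in Lemma~\ref{add-n09}: Doob's exponential maximal inequality for the submartingales $\exp(\pm\lambda(X-X(j)))$, Markov's inequality, Lemma~\ref{add-n07}, and a union bound over integer sub-intervals of $[0,t+2]$.

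For $A_t$, the chain of inequalities $\floor{\floor{t}s}\le \floor{t}s\le ts\le \floor{\floor{t}s}+2$ (valid for every $s\in[0,1]$) places both endpoints of the increment in a common interval $[j,j+2]$ with $j=\floor{\floor{t}s}\in\{0,\dots,\floor{t}\}$, giving
\begin{equation*}
\sup_s|A_t(s)|\le S(\floor{t})^{-1}\max_{0\le j\le\floor{t}}\sup_{u\in[j,j+2]}|X(u)-X(j)|.
\end{equation*}
Doob applied to $\exp(\pm\lambda(X(\lcdot)-X(j)))$, a union bound, and Lemma~\ref{add-n07} with $\lambda:=r\,S(\floor{t})/\floor{t}^\gamma$ then produce, after dividing the log-probability by $S(\floor{t})^2/\floor{t}^\gamma$, an upper bound of the form
\begin{equation*}
\frac{\log(2(\floor{t}+1))}{S(\floor{t})^2/\floor{t}^\gamma} \;-\; r\varepsilon \;+\; \tfrac{r^2}{2}\,\frac{V^*(t)}{\floor{t}^\gamma} \;+\; r^3\,\frac{S(\floor{t})\,E_{0,\floor{t}+2}(\lambda)}{\floor{t}^{2\gamma}},
\end{equation*}
where $V^*(t):=\max_{j\le\floor{t}}\operatorname{Var}(X_{j+2}-X_j)=o(\floor{t}^\gamma)$ by \eqref{add-neq11} (treating $\gamma\le 1$ and $\gamma>1$ separately), and the last term vanishes by \eqref{add-neq35}. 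The first summand need not be $o(1)$ on the moderate-deviation scale --- for the Strassen scaling $S(t)^2/t^\gamma=2\log\log t$ it is of order $\log t/\log\log t$ --- so one lets $r=r(t)\to\infty$ slowly; this is admissible because the constraint $\lambda\le\lambda_0$ only requires $r\le\lambda_0\floor{t}^\gamma/S(\floor{t})$, and the right-hand side tends to infinity polynomially thanks to \eqref{add-neq13}. A suitable growth rate of $r(t)$ drives the bound to $-\infty$, and \eqref{add-neq15} transfers the estimate to the speed $S(t)^2/t^\gamma$.

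For $B_t$, setting $\eta(t):=|S(t)/S(\floor{t})-1|$ one has $\eta(t)\to 0$ by \eqref{add-neq15}, and hence $\sup_s|B_t(s)|\le \eta(t)\,\sup_{u\le t}|X(u)|/S(t)$ for $t$ large; a single-interval Doob--Markov--Lemma~\ref{add-n07} estimate (no union bound needed) bounds the right-hand side, and the decay rate tends to $-\infty$ as $\eta\to 0$. The main obstacle throughout is matching the polynomial union-bound prefactor $\floor{t}+1$ in the $A_t$ argument against the exponential decay; this is the one place where the freedom to let the parameter $r$ tend to infinity with $t$ --- rather than keeping $r$ a fixed constant as in the proof of Lemma~\ref{add-n09} --- is essential.
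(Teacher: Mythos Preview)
Your decomposition and the treatment of $B_t$ are fine, but the argument for $A_t$ contains a genuine gap. The difficulty is precisely the one you flag: the union bound over $\floor{t}+1$ intervals of length~$2$ produces a term $\log t\big/(S(t)^2/t^\gamma)$ that need not be $o(1)$, so $r=r(t)\to\infty$ is forced. Once $r$ depends on $t$, however, neither of the remaining positive terms is under control. You invoke \eqref{add-neq35} for the cubic error term, but that limit is stated only for \emph{fixed} $r$; and while $V^*(t)=o(t^\gamma)$ is correct, its rate of decay is governed solely by how fast $\var X_t/t^\gamma\to\sigma^2$, which the hypotheses do not quantify. These rates can be made arbitrarily slow: take a purely Gaussian additive process ($\nu\equiv 0$, so \ref{C1} holds trivially) with $\var X_t=\sigma^2 t+g(t)$, where $g$ is continuous, increasing, $g(t)/t\to 0$, yet $g$ increases by $\sim 2^k/\log k$ on each interval $[2^k,2^k+1]$. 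Then with the Strassen scaling one has $V^*(t)/t\sim 1/\log\log t$ and $\delta_1(t)\sim\log t/\log\log t$, so the requirement $\delta_1(t)\ll r(t)\ll t/V^*(t)\sim\log\log t$ is void, and no choice of $r(t)$ drives $\delta_1-r\varepsilon+\tfrac{r^2}{2}V^*/t^\gamma$ to $-\infty$.

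The paper sidesteps this by partitioning $[0,t]$ not into $O(t)$ intervals of bounded length but into a \emph{bounded} number $\floor{\alpha^{-1}}+2$ of blocks of length $\alpha t$, via
\[
\sup_{s\in[0,1]}|Z_{\floor{t}}(s)-X(ts)|
\;\le\;\sup_{\substack{u,v\le t\\ |u-v|\le 2}}|X(u)-X(v)|
\;\le\; 3\max_{j\le\floor{\alpha^{-1}}+1}\,\sup_{0\le u\le\alpha t}|X(j\alpha t+u)-X(j\alpha t)|.
\]
Now the union-bound prefactor is a harmless constant, and the block variances satisfy $\big(\var X_{(j+1)\alpha t}-\var X_{j\alpha t}\big)/t^\gamma\to c\,\alpha$ as $t\to\infty$ for fixed $\alpha$ --- this \emph{does} follow from \eqref{add-neq11} alone. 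One then takes the iterated limit $t\to\infty$, $\alpha\to 0$, $r\to\infty$, with $r$ a fixed constant at each stage, so \eqref{add-neq35} applies as written. Replacing your length-$2$ intervals by length-$\alpha t$ blocks is exactly the missing idea.
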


\begin{proof}
	Let $\eps>0$ and $\alpha>0$. Obviously, \begin{equation}
		\left\| \frac{Z_{\floor{t}}}{S(\floor{t})} - \frac{X(t\lcdot)}{S(t)} \right\|_{\infty}
		\leq \left|\frac{S(t)}{S(\floor{t})}- 1\right| \cdot \frac{\|Z_{\floor{t}}\|_{\infty}}{S(t)} + \left\| \frac{Z_{\floor{t}}}{S(t)}-\frac{X(t\lcdot)}{S(t)} \right\|_{\infty} =: A_t+B_t. \label{add-neq51}
	\end{equation}
	We estimate $\mbb{P}(A_t>\eps)$ and $\mbb{P}(B_t>\eps)$ separately. As in the proof of Lemma~\ref{add-n09} we find \begin{align*}
		\mbb{P}(A_t>\eps)
		\leq \mbb{P} \left( \max_{0 \leq k \leq \floor{t}} \frac{|X_k|}{S(t)} > \frac{\eps}{\delta(t)} \right)
		\leq 6 \exp \left[- \frac{S(t)^2}{t^{\gamma}} \left( \frac{\eps}{3\delta(t)} - \sigma^2 \right) \right]
	\end{align*}
	for $t$ sufficiently large where $\delta(t) := \left| \frac{S(t)}{S(\floor{t})} -1 \right| \to 0$ as $t \to \infty$, cf.\ \eqref{add-neq15}. In order to estimate $B_t$ we note that \begin{align*}
			\sup_{s \in [0,1]} |Z_{\floor{t}}(s)-X(t s)|
			&\leq \sup_{\substack{u,v \leq t \\ |u-v| \leq 2}} |X_{u}-X_v|
			\leq 3 \adjustlimits\max_{j \leq \floor{\alpha^{-1}}+1} \sup_{\substack{0 \leq u \leq \alpha t \\  j\alpha t+ u \leq t}} |X_{j \alpha t+u}-X_{j \alpha t}|
	\end{align*}
	for $t \geq t_0(\alpha)$ sufficiently large. Applying again Etemadi's inequality yields \begin{align*}
		\mbb{P}(B_t>\eps)
		&\leq 3 \sum_{j=0}^{\floor{\alpha^{-1}}+1} \sup_{\substack{0 \leq u \leq \alpha t \\ j\alpha t +u \leq t}}  \mbb{P} \left( |X_{j \alpha t+u}-X_{j\alpha t}|> \frac{S(t) \eps}{9} \right).
	\end{align*}
	By Markov's inequality and Lemma~\ref{add-n07}, \begin{align}
		\mbb{P}(B_t>\eps)
		& \leq 6\exp \left(-\frac{S(t) \lambda \eps}{9} \right)  \sum_{j=0}^{\floor{\alpha^{-1}}+1} \exp \left( \frac{\lambda^2}{2} (\var X_{(j+1)\alpha t}-\var X_{j \alpha t}) + \lambda^3 E_{0,t}(\lambda) \right). \label{add-neq53}
	\end{align}
	A similar calculation as in the second part of the proof of Lemma~\ref{add-n09} shows \begin{align*}
		\frac{1}{t^{\gamma}} (\var X_{(j+1)\alpha t}-\var X_{j\alpha t}) 
		&\leq 2\alpha c+ c \sup_{k \geq 1} \left| \frac{\var X_{k\alpha t}}{(k \alpha t)^\gamma}-\sigma^2 \right|=:\delta(\alpha,t)
	\end{align*}
	for some constant $c=c(\gamma)$. In particular, $\delta(\alpha,t) \xrightarrow[]{t \to \infty} 2 \alpha c$. Setting $\lambda := r S(t)/t^\gamma$, $r \geq 1$, we get \begin{align*}
		\limsup_{t \to \infty} \frac{t^\gamma}{S(t)^2} \log \mbb{P}(B_t>\eps)
		&\leq - \frac{r \eps}{9}+ \lim_{t \to \infty} \left( \frac{r^2}{2} \delta(\alpha,t) + r^3 \frac{S(t)}{t^{2\gamma}} E_{0,t} \left( r \frac{S(t)}{t^\gamma} \right) \right)
		= - \frac{r \eps}{9} +c \alpha r^2.
	\end{align*}
	Finally, we conclude \begin{align*}
		\limsup_{t \to \infty} \frac{t^\gamma}{S(t)^2} \log \mbb{P} \left(\left\| \frac{Z_{\floor{t}}}{S(\floor{t})} - \frac{X(t\lcdot)}{S(t)} \right\|_{\infty} > 2\eps \right)
		&\leq  - \frac{r \eps}{9}+c \alpha r^2  \xrightarrow[]{\alpha \to 0,r \to \infty} -\infty. \qedhere
	\end{align*}
\end{proof}

Combining Lemma~\ref{add-n11} and Lemma~\ref{add-n13}, we find that $(X(t \lcdot)/S(t))_{t>0}$ satisfies a moderate deviation principle with good rate function $J$ and speed $S(t)^2/t^{\gamma}$, cf.\ \cite[Theorem 4.2.13]{dz}. It remains to identify the good rate function.

\begin{thm} \label{add-n15}
	The good rate function $J$, \begin{equation*}
		J(f) = \sup_{\alpha \in \bv \cap D[0,1]} \left( \int f \, d\alpha - \frac{\gamma \sigma^2}{2} \int_0^1 s^{\gamma-1} (\alpha(1)-\alpha(s))^2 \, ds \right),
	\end{equation*}
	equals \begin{equation*}
		I(f) = \begin{cases} \displaystyle\frac{1}{2\gamma \sigma^2} \int_0^1 \frac{f'(s)^2}{s^{\gamma-1}} \, ds, & f \in \ac, f(0)=0, \\ \displaystyle\infty, & \text{otherwise}. \end{cases}
	\end{equation*}
	In particular, $\dom I = \dom J \subseteq \ac$.
\end{thm}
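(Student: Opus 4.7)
The plan is to identify $J$ as an explicit Legendre--Fenchel dual. The key substitution is $g(s):=\alpha(1)-\alpha(s)$, which turns the penalty into the squared norm $\|g\|_H^2:=\int_0^1 s^{\gamma-1}g(s)^2\,ds$ in the weighted Hilbert space $H:=L^2([0,1],s^{\gamma-1}\,ds)$. Since adding constants to $\alpha$ leaves both the linear term and the penalty invariant, the supremum may be taken over $g\in\bv\cap D[0,1]$ with $g(1)=0$. For $f\in\ac$ with $f(0)=0$, Fubini applied to $f(s)=\int_0^s f'(u)\,du$ (equivalently, integration by parts using $f(0)=0$ and $g(1)=0$) rewrites the linear term as $\int_0^1 f'(u)\,g(u)\,du$.

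The upper bound $J(f)\le I(f)$ then follows from Cauchy--Schwarz in $H$, $\int_0^1 f'g\,du\le\sqrt{2\gamma\sigma^2 I(f)}\,\|g\|_H$, followed by maximizing the resulting quadratic in $\|g\|_H$. For the matching lower bound the natural maximizer is $g^{\ast}(s):=f'(s)/(\gamma\sigma^2 s^{\gamma-1})$, which saturates Cauchy--Schwarz with value exactly $I(f)$. Since $g^{\ast}$ need not lie in $\bv\cap D[0,1]$ nor satisfy $g^{\ast}(1)=0$, I truncate to $g^{\ast}_{A,\eps}:=g^{\ast}\,\I_{\{|g^{\ast}|\le A\}\cap[0,1-\eps]}$ and approximate this bounded function (supported away from $s=1$) a.e.\ by uniformly bounded step functions $g_n\in\bv\cap D[0,1]$ with $g_n(1)=0$. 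Dominated convergence with the integrable dominant $A|f'|$ transfers $\int f'g_n\to\int f'g^{\ast}_{A,\eps}$ and $\|g_n\|_H\to\|g^{\ast}_{A,\eps}\|_H$, and monotone convergence as $A\to\infty$ and $\eps\to 0$ recovers the value $I(f)$, so $J(f)\ge I(f)$; if $I(f)=\infty$ while still $f\in\ac$ with $f(0)=0$, the partial value $I_A(f)$ itself diverges and gives $J(f)=\infty$.

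It remains to rule out $J(f)<\infty$ when $f\notin\dom I$. If $f(0)\neq 0$, plugging $\alpha=M\,\I_{[\delta,1]}$ gives $\int f\,d\alpha=Mf(\delta)$ and penalty $\sigma^2 M^2\delta^{\gamma}/2$; optimizing over $M$ and letting $\delta\downarrow 0$ yields $J(f)\ge f(\delta)^2/(2\sigma^2\delta^{\gamma})\to\infty$ by right-continuity of $f$ at $0$. A jump of $f$ at some $t_0\in(0,1]$ is handled analogously by $\alpha=M\,\I_{[t_0-\delta,t_0)}$. The subtlest case is $f$ continuous with $f(0)=0$ but $f\notin\ac[0,1]$: if $f$ is AC on every $[\delta,1]$, a Cauchy--Schwarz estimate forces $\int_0^1 (f')^2/s^{\gamma-1}\,ds=\infty$ (otherwise $f'\in L^1$ and $f$ would be AC on $[0,1]$), so the same truncation argument as above yields $J(f)=\infty$; otherwise $f$ fails AC on some compact $[\delta_0,1]$, and disjoint intervals $(a_k,b_k)\subset[\delta_0,1]$ witnessing this failure, fed into $\alpha=\sum_k c_k\,\I_{[a_k,b_k)}$ with componentwise-optimal $c_k$, drive $J$ above any threshold via $J(f)\ge\eta^2/(2\gamma\sigma^2\sum_k V_k)$ where $V_k:=\int_{a_k}^{b_k}s^{\gamma-1}\,ds$ can be made arbitrarily small. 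I expect the main technical obstacle to be this last case, together with carefully approximating the possibly unbounded maximizer $g^{\ast}$ by $\bv\cap D[0,1]$ test functions respecting the boundary constraint $g_n(1)=0$.
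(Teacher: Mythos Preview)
Your argument is correct and structurally close to the paper's, but differs in two places. For the upper bound $J\le I$ both routes are the same: integrate by parts to turn $\int f\,d\alpha$ into $\int_0^1 f'(s)(\alpha(1)-\alpha(s))\,ds$ and then apply the quadratic Young/Cauchy--Schwarz inequality pointwise. For the lower bound $I\le J$, the paper does \emph{not} approximate the maximizer $g^\ast$; instead it takes the explicit step functions
\[
\alpha^{[n]}(t)=\sum_{j=0}^{n-1}\alpha_j^{[n]}\,\I_{[j/n,(j+1)/n)}(t),\qquad
\alpha_j^{[n]}=-\frac{n}{\sigma^2\gamma}\Bigl(\tfrac{n}{j+1}\Bigr)^{\gamma-1}\bigl[f(\tfrac{j+1}{n})-f(\tfrac{j}{n})\bigr],
\]
and obtains $I(f)\le J(f)$ in one line via Fatou's lemma applied to the a.e.\ convergence of the weighted difference quotients to $f'(s)/s^{(\gamma-1)/2}$. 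This sidesteps your truncation $g^\ast_{A,\eps}$ and the associated dominated/monotone convergence bookkeeping. For the domain identification $\dom J\subseteq\ac$, the paper again uses a single unified test: with $\alpha=\sum_j c_j\,\I_{[s_j,t_j)}$ and $c_j=r\,\sgn(f(s_j)-f(t_j))$ one gets
\[
\sum_j|f(s_j)-f(t_j)|\le \frac{J(f)}{r}+\frac{r\gamma\sigma^2}{2}\sum_j|t_j-s_j|,
\]
which yields absolute continuity on all of $[0,1]$ and $f(0)=0$ simultaneously, with no case split into ``jump at $t_0$'', ``$f(0)\neq 0$'', ``AC on every $[\delta,1]$ but not on $[0,1]$'', etc. Your case analysis is valid (and your observation that $\int_0^1 f'(s)^2 s^{1-\gamma}\,ds<\infty$ together with continuity at $0$ forces $f\in\ac[0,1]$ is correct via Cauchy--Schwarz), but the paper's treatment is noticeably shorter. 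Conversely, your Hilbert-space framing in $L^2(s^{\gamma-1}\,ds)$ makes the duality structure transparent and would adapt more readily to non-quadratic penalties.
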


\begin{proof}
	We only consider $\gamma \geq 1$; the case $0<\gamma<1$ is proved similarly. First, we show that $J(f)< \infty$ implies $f \in \ac$ and $f(0)=0$. If so, then \begin{equation}
		f(t) = \int_0^t f'(s) \, ds, \qquad t \in [0,1]. \label{add-neq55}
	\end{equation}
	To this end, let $0<s_1<t_1 < \ldots < s_n < t_n \leq 1$ and define \begin{equation*}
		\alpha(t) := \sum_{j=1}^n c_j \I_{[s_j,t_j)}(t), \qquad t \in [0,1]
	\end{equation*}
	for $c=(c_1,\ldots,c_n) \in \mbb{R}^n$. Obviously, $\alpha \in \bv \cap D[0,1]$ and \begin{equation}
		\int_0^1 f \, d\alpha = \sum_{j=1}^n c_j (f(s_j)-f(t_j)). \label{add-neq57}
	\end{equation}
	By definition of $J$ we have \begin{align*}
		\sum_{j=1}^n c_j (f(s_j)-f(t_j))
		&= \int_0^1 f \, d\alpha
		\leq J(f) + \frac{\gamma \sigma^2}{2} \int_0^1 s^{\gamma-1} \alpha(s)^2 \, ds
		\leq J(f)+ \frac{\gamma \sigma^2 }{2} \sum_{j=1}^n c_j^2 |t_j-s_j|.
	\end{align*}
	If we choose $c_j := r \sgn(f(s_j)-f(t_j))$, $r \geq 1$, we get \begin{equation*}
		\sum_{j=1}^n |f(s_j)-f(t_j)| \leq \frac{J(f)}{r} + \frac{r \gamma \sigma^2 }{2} \sum_{j=1}^n |t_j-s_j|.
	\end{equation*}
	This proves that $f$ is absolutely continuous. A similar calculation shows \begin{equation*}
		|f(t)| \leq \frac{J(f)}{r}+t^{\gamma} \frac{r \sigma^2}{2}.
	\end{equation*}
	Letting $t \to 0$ and $r \to \infty$ we obtain $f(0)=0$. This proves \eqref{add-neq55}.
	Now let $f$ be given by \eqref{add-neq55}. If we set $\varphi(x) := x^2/2$, then we see from the integration by parts formula that \begin{align*}
		\int_0^1 f \, d\alpha - \frac{\gamma \sigma^2}{2} \int_0^1 s^{\gamma-1}& (\alpha(1)-\alpha(s))^2 \, ds \\
		&= \int_0^1  \left[ f'(s) (\alpha(1)-\alpha(s)) - \frac{\gamma \sigma^2}{2} s^{\gamma-1} (\alpha(1)-\alpha(s))^2 \right] \, ds \\
		&= \int_0^1 \left[f'(s) (\alpha(1)-\alpha(s)) - \varphi \left( \sqrt{\gamma \sigma^2 s^{\gamma-1}}  (\alpha(1)-\alpha(s))\right) \right] \, ds \\
		&\leq \int_0^1 \varphi^{\ast} \left( \frac{f'(s)}{\sqrt{\gamma \sigma^2  s^{\gamma-1}}} \right) \, ds \\
		&=\frac{1}{2 \gamma \sigma^2 } \int_0^1 \frac{f'(s)^2}{s^{\gamma-1}} \, ds = I(f)
	\end{align*}
	where \begin{equation*}
		\varphi^{\ast}(\beta) := \sup_{x \in \mbb{R}}\left(\beta x - \varphi(x)\right) = \frac{1}{2} \beta^2, \qquad \beta \in \mbb{R},
	\end{equation*}
	denotes the Legendre transform of $\varphi$. Hence, $J(f) \leq I(f)$. On the other hand, for $f \in \ac$,\begin{equation*}
		\sum_{j=0}^{n-1} \frac{f \left( \frac{j+1}{n} \right)- f \left( \frac{j}{n} \right)}{\frac{1}{n}} \left( \frac{n}{j+1} \right)^{(\gamma-1)/2} \I_{\big[\frac{j}{n},\frac{j+1}{n} \big)}(s) \xrightarrow[]{n \to \infty} \frac{f'(s)}{s^{(\gamma-1)/2}} \qquad \text{for almost all $s \in [0,1]$}.
	\end{equation*}
	Therefore, Fatou's lemma and \eqref{add-neq57} imply \begin{align*}
		I(f)
		&\leq \frac{1}{2\sigma^2 \gamma} \liminf_{n \to \infty} \left( n \sum_{j=0}^{n-1} \left[f \left( \frac{j+1}{n} \right)- f \left( \frac{j}{n} \right) \right]^2 \left( \frac{n}{j+1} \right)^{\gamma-1} \right) \\
		&= \liminf_{n \to \infty} \bigg( \int_0^1 f \, d\alpha^{[n]} - \frac{\sigma^2 \gamma}{2} \sum_{j=0}^{n-1} \int_{j/n}^{(j+1)/n} (\alpha_j^{[n]})^2 \left( \frac{j+1}{n} \right)^{\gamma-1} \,ds \bigg)
	\end{align*}
	for \begin{align*}
		\alpha_j^{[n]} &:= - \frac{n}{\sigma^2 \gamma}  \left( \frac{n}{j+1} \right)^{\gamma-1} \left[f \left( \frac{j+1}{n} \right)- f \left( \frac{j}{n} \right) \right] \\
		\alpha^{[n]}(t) &:= \sum_{j=0}^{n-1} \alpha_j^{[n]} \I_{[j/n,(j+1)/n)}(t).
	\end{align*}
	Using that $\left(\frac{j+1}{n} \right)^{\gamma-1} \geq s^{\gamma-1}$ for any $s \in [j/n,(j+1)/n]$, we get  \begin{equation*}
		I(f)
		\leq \liminf_{n \to \infty} \left( \int_0^1 f \, d\alpha^{[n]} - \frac{\sigma^2 \gamma}{2} \int_0^1  \alpha^{[n]}(s)^2 s^{\gamma-1} \, ds \right)
		\leq J(f). \qedhere
	\end{equation*}
\end{proof}

Next, we prove Strassen's law, Theorem~\ref{add-n03}. The proof is inspired by \cite[Chapter 13]{bm2} where the result is shown for Brownian motion. Without loss of generality, we assume throughout the proof of Theorem~\ref{add-n03} that $\sigma^2=1$. We need the following lemma.

\begin{lem} \label{add-n17}
Let $I$ be the good rate function defined in \eqref{add-neq23}. For $c>0$ set \begin{align*}
	A &:= \left\{f \in D[0,1]; \adjustlimits\sup_{q^{-1} \leq t \leq 1} \sup_{0 \leq s \leq 1} |f(st)-f(s)| \geq c \right\}, & B := \left\{f \in D[0,1]; \sup_{s\in [0,1]} |f(s)| \geq c \right\}.
\end{align*}
Then $A$ and $B$ are closed  in $(D[0,1],\|\cdot\|_{\infty})$ and \begin{align*}
	\inf_{f \in A} I(f) \geq \frac{c^2}{2} \frac{q^{\gamma}}{q^{\gamma}-1}, \qquad \quad \inf_{f \in B} I(f) \geq \frac{c^2}{2}.
\end{align*}
\end{lem}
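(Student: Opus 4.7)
The plan is to handle the three claims separately: closedness of $A$ and $B$, the lower bound on $I$ over $B$, and the lower bound on $I$ over $A$. The closedness I would verify by observing that both sets are sublevel sets of sup-type functionals that are Lipschitz in $\|\cdot\|_\infty$. Explicitly, $f \mapsto \|f\|_\infty$ is $1$-Lipschitz, and for $\phi(f) := \sup_{q^{-1}\le t\le 1}\sup_{0\le s\le 1}|f(st)-f(s)|$ the reverse triangle inequality gives $|\phi(f)-\phi(g)| \le 2\|f-g\|_\infty$. Hence $A$ and $B$ are preimages of the closed half-line $[c,\infty)$ under continuous maps, so both are closed in $(D[0,1],\|\cdot\|_\infty)$.

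For the bound over $B$, I would restrict to $f$ with $I(f) < \infty$, which by the definition in \eqref{add-neq23} forces $f \in \ac$ and $f(0)=0$; in particular $f$ is continuous, so there exists $s_0 \in [0,1]$ with $|f(s_0)| \geq c$. Writing $f(s_0)=\int_0^{s_0}f'(u)\,du$ and inserting the factorization $f'(u) = u^{-(\gamma-1)/2}\cdot u^{(\gamma-1)/2}f'(u)$, a weighted Cauchy--Schwarz inequality yields $|f(s_0)|^2 \leq \bigl(\int_0^{s_0}u^{\gamma-1}\,du\bigr)\cdot 2\gamma I(f) = 2s_0^{\gamma}I(f) \leq 2I(f)$, which is precisely the desired bound $I(f) \geq c^2/2$.

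The bound over $A$ follows from the same idea applied on the sub-interval $[s_0 t_0, s_0]$, where $s_0 \in [0,1]$ and $t_0 \in [q^{-1},1]$ are chosen so that $|f(s_0)-f(s_0 t_0)| \geq c$ (possible by the same continuity/compactness argument). The analogous Cauchy--Schwarz estimate replaces $\int_0^{s_0}u^{\gamma-1}\,du$ by $\int_{s_0 t_0}^{s_0}u^{\gamma-1}\,du = \gamma^{-1}s_0^{\gamma}(1-t_0^{\gamma}) \leq \gamma^{-1}(1-q^{-\gamma})$ (using $s_0\le1$ and $t_0\ge q^{-1}$), and rearranging gives the claimed $I(f) \geq \frac{c^2}{2}\cdot\frac{q^\gamma}{q^\gamma-1}$. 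I do not foresee a genuine obstacle: the entire argument is a weighted Cauchy--Schwarz computation plus continuity of the sup functionals. The only subtlety worth flagging is that when $0<\gamma<1$ the weight $u^{\gamma-1}$ is singular at $0$, but it remains integrable on $[0,1]$, so the same estimate covers all $\gamma>0$ without case analysis.
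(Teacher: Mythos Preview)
Your argument is correct and follows the same route as the paper: the lower bounds on $I$ over $A$ and $B$ are obtained via the weighted Cauchy--Schwarz inequality $|f(b)-f(a)|^2 \le \bigl(\int_a^b u^{\gamma-1}\,du\bigr)\cdot 2\gamma I(f)$, applied on $[0,s_0]$ and $[s_0 t_0,s_0]$ respectively. Your treatment is slightly more explicit than the paper's (you spell out the closedness via Lipschitz continuity and extract maximizing points $s_0,t_0$ rather than working with the suprema directly); the only cosmetic slip is that $A$ and $B$ are \emph{super}level sets, not sublevel sets, of the respective functionals.
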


\begin{proof}
	By the Cauchy--Schwarz inequality, \begin{align*}
		c^2
		\leq \adjustlimits\sup_{q^{-1} \leq t \leq 1} \sup_{0 \leq s \leq 1} |f(st)-f(s)|^2
		&= \adjustlimits\sup_{q^{-1} \leq t \leq 1} \sup_{0 \leq s \leq 1} \left| \int_{st}^{s} f'(r)\, dr \right|^2 \\
		&\leq \adjustlimits\sup_{q^{-1} \leq t \leq 1} \sup_{0 \leq s \leq 1} \left[ \left( \int_0^1 \frac{f'(r)^2}{r^{\gamma-1}} \, dr \right) \left(\int_{st}^s r^{\gamma-1} \,dr \right) \right] \\
		&= 2 (1-q^{-\gamma}) I(f)
	\end{align*}
	for any $f \in A \cap \dom I \subseteq \ac$. Similarly, we find from the Cauchy--Schwarz inequality\begin{equation*}
		c^2
		\leq \sup_{0 \leq s \leq 1} \left| \int_0^s f'(r) \, dr \right|^2
		\leq 2\gamma I(f) \int_0^1 r^{\gamma-1} \, dr = 2 I(f) \fa f \in B. \qedhere
	\end{equation*}
\end{proof}

\begin{lem} \label{add-n19}%
	The set of limit points $\mc{L}(\omega)$ satisfies $\mc{L}(\omega) \subseteq \Phi(\frac{1}{2})$ for almost all $\omega \in \Omega$.
\end{lem}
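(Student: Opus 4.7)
The plan is to apply the moderate deviation principle along the geometric subsequence $t_n := q^n$ (for some $q>1$), combine it with Borel--Cantelli to show that $g_n := X(t_n\lcdot)/S(t_n)$ accumulates in $\Phi(\frac{1}{2})$, and then use Lemma~\ref{add-n17} to interpolate between consecutive subsequence times. Since $S(t)=\sqrt{2t^\gamma\log\log(t\vee e^e)}$, the speed along the subsequence is $S(t_n)^2/t_n^\gamma = 2\log\log(t_n\vee e^e)$.

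Fix $\eta>0$ and $q>1$. The set $F_\eta := \{f \in D[0,1];\; d(f,\Phi(\tfrac{1}{2}))\geq\eta\}$ is closed in $(D[0,1],\|\cdot\|_\infty)$ and disjoint from $\Phi(\frac{1}{2})$. Because $\Phi(1)$ is compact and $I$ is lower semicontinuous, any minimizing sequence in $F_\eta$ with $I(f_n)\to\frac{1}{2}$ would lie eventually in $\Phi(1)$, and extracting a subsequential limit would place a point of $\Phi(\frac{1}{2})$ inside the closed set $F_\eta$, a contradiction. Hence $\kappa:=\inf_{F_\eta}I>\frac{1}{2}$, and the MDP upper bound gives $\mbb{P}(g_n\in F_\eta)\leq(\log t_n)^{-2\kappa+o(1)}$, which is summable in $n$; Borel--Cantelli then yields $d(g_n,\Phi(\tfrac{1}{2}))<\eta$ a.s.\ for all large $n$. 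To interpolate, fix $t\in[t_n,t_{n+1}]$, set $\tau:=t/t_{n+1}\in[q^{-1},1]$, and use $X(ts)=S(t_{n+1})g_{n+1}(\tau s)$ to split
\begin{equation*}
	\left\|\frac{X(t\lcdot)}{S(t)}-g_{n+1}\right\|_\infty \leq \frac{S(t_{n+1})}{S(t)}W_{n+1} + \left(\frac{S(t_{n+1})}{S(t)}-1\right)\|g_{n+1}\|_\infty,
\end{equation*}
where $W_{n+1}:=\sup_{q^{-1}\leq\tau\leq 1}\sup_{s\in[0,1]}|g_{n+1}(\tau s)-g_{n+1}(s)|$ is exactly the quantity appearing in the set $A$ of Lemma~\ref{add-n17}. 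The bound $\inf_A I\geq c^2 q^\gamma/(2(q^\gamma-1))$ combined with the MDP upper bound and Borel--Cantelli then shows $W_{n+1}<c$ a.s.\ eventually whenever $c^2>1-q^{-\gamma}$. Cauchy--Schwarz applied to $f=\int_0^\cdot f'(s)\,ds$ gives $\|f\|_\infty\leq\sigma=1$ for any $f\in\Phi(\frac{1}{2})$, so $\|g_{n+1}\|_\infty\leq 1+\eta$ eventually, and the explicit form of $S$ yields $S(t_{n+1})/S(t)\leq q^{\gamma/2}+o(1)$ uniformly in $t\in[t_n,t_{n+1}]$.

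Combining the estimates, almost surely and for $t$ sufficiently large,
\begin{equation*}
	d\!\left(\frac{X(t\lcdot)}{S(t)},\Phi(\tfrac{1}{2})\right) \leq \eta + q^{\gamma/2}c + (q^{\gamma/2}-1)(1+\eta) + o(1).
\end{equation*}
Intersecting the full-probability events along countable sequences $q_k\downarrow 1$, $\eta_k\downarrow 0$, $c_k\downarrow\sqrt{1-q_k^{-\gamma}}$, the right-hand side can be made arbitrarily small, so $d(X(t\lcdot)/S(t),\Phi(\frac{1}{2}))\to 0$ a.s., which is precisely $\mc{L}(\omega)\subseteq\Phi(\frac{1}{2})$. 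I expect the interpolation step to be the main obstacle: the decomposition must be set up so that exactly the oscillation quantity controlled by Lemma~\ref{add-n17} governs $\|X(t\lcdot)/S(t)-g_{n+1}\|_\infty$, with no uncontrolled increment left over and with the factor $S(t_{n+1})/S(t)$ kept as close to $1$ as possible.
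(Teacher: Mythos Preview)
Your argument is correct and follows essentially the same route as the paper: a geometric subsequence $t_n=q^n$, the MDP upper bound plus Borel--Cantelli to place $g_n$ near $\Phi(\tfrac12)$, and then Lemma~\ref{add-n17} (the set $A$) to control the interpolation oscillation $W_{n+1}$, with $q\downarrow 1$ at the end. The only cosmetic differences are that you extract $\kappa=\inf_{F_\eta}I>\tfrac12$ via compactness of $\Phi(1)$ rather than passing through $\Phi(\tfrac12+r)$, and you bound $\|g_{n+1}\|_\infty$ by recycling the first Borel--Cantelli step instead of running a second one through the set $B$ of Lemma~\ref{add-n17}; both shortcuts are legitimate.
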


\begin{proof} %
	Since $\Phi(\frac{1}{2}) = \bigcap_{r>0} \Phi(\frac{1}{2}+r)$, it suffices to show $\mc{L}(\omega) \subseteq \Phi(\frac{1}{2}+r)$ for any $r>0$. Set $Z_t := X(t \lcdot)/S(t)$, and fix $q>1$, $\delta>0$. Applying the large deviation upper bound (cf.\ \cite[Theorem 3.3.3]{fw}) gives \begin{align*}
		\mbb{P} \left( d\big(Z_{q^n},\Phi(\tfrac{1}{2}+r)\big)>\delta \right)
		&\leq \exp \left(-2 \left( \frac{1}{2}+\frac{r}{2} \right) \log \log q^n \right)
	\end{align*}
	for $n$ sufficiently large. Thus, by the Borel--Cantelli lemma, \begin{equation*}
		d\big(Z_{q^n}(\lcdot,\omega),\Phi(\tfrac{1}{2}+r)\big) \leq \delta \end{equation*}
	for $n \geq n_0(q,\omega)$. It remains to fill the gaps in the sequence $(q^n)_{n \in \mbb{N}}$. Note that \begin{align*}
		\sup_{q^{n-1}\leq t\leq q^n} \|Z_t - Z_{q^n}\|_\infty
		&= \adjustlimits\sup_{q^{n-1}\leq t\leq q^n}\sup_{0\leq s\leq 1}\left|\frac{X(st)}{S(t)} - \frac{X(sq^n)}{S(q^n)}\right|\\
		&\leq \adjustlimits\sup_{q^{n-1}\leq t\leq q^n}\sup_{0\leq s\leq 1} \frac{|X(st)-X(sq^n)|}{S(q^n)}
		+ \adjustlimits\sup_{q^{n-1}\leq t\leq q^n}\sup_{0\leq s\leq 1}  \frac{|X(st)|}{S(q^n)}\left|\frac{S(q^n)}{S(t)}-1\right| \\
		&=: A_n+B_n.
	\end{align*}
	As \begin{equation*}
		\mathbb{P}\left(B_n \geq \frac{\delta}{2} \right)
		= \mathbb{P} \left( \sup_{0 \leq t \leq 1} \left| \frac{X(t q^n)}{S(q^n)} \right| \cdot \left| \frac{S(q^n)}{S(q^{n-1})}-1 \right| \geq \frac{\delta}{2} \right),
	\end{equation*}
	it follows easily from $S(q^n)/S(q^{n-1}) \xrightarrow[]{n \to \infty} q^{\gamma/2}$ and the large deviation upper bound that $\sum_{n \in \mbb{N}} \mathbb{P}(B_n \geq \delta/2)<\infty$ if we choose $q>1$ close to $1$. Hence, by the Borel--Cantelli lemma, $B_n \leq \frac{\delta}{2}$ for $n \geq n_1(q,\omega)$ sufficiently large. In order to estimate $A_n$ we note that\begin{align*}
		\mbb{P}\left( A_n \geq \frac{\delta}{2} \right)
		&= \mbb{P}\left( \adjustlimits\sup_{q^{-1} \leq t \leq 1} \sup_{0 \leq s \leq 1} \frac{|X(s q^n t)-X( sq^n)|}{S(q^n)} \geq \frac{\delta}{2} \right)
		\leq \mbb{P} \left( \frac{X(q^n \lcdot)}{S(q^n)} \in A \right)
	\end{align*}
	for \begin{equation*}
		A := \left\{f \in D[0,1]; \adjustlimits\sup_{q^{-1} \leq t \leq 1} \sup_{0 \leq s \leq 1} |f(st)-f(s)| \geq \frac{\delta}{2} \right\}.
	\end{equation*}
	Therefore, by the large deviation upper bound and Lemma~\ref{add-n17}, \begin{equation*}
		\mbb{P}\left( A_n\geq \frac{\delta}{2} \right) \leq \exp \left(- \frac{\delta^2}{8} \frac{q^\gamma}{q^\gamma-1}  \log \log q^n \right)
	\end{equation*}
	for  $n \geq n_2(q)$ sufficiently large.
	If $q>1$ is close to $1$, this implies $\sum_{n \in \mbb{N}} \mbb{P}(A_n\geq\delta/2)<\infty$. By the Borel--Cantelli lemma, we conclude \begin{equation*}
		\sup_{q^{n-1} \leq t \leq q^n} \|Z_t-Z_{q^n}\|_{\infty} \leq \delta
	\end{equation*}
	for $n \geq n_3(q,\omega)$ sufficiently large. Finally,\begin{align*}
		d\big(Z_s(\lcdot,\omega),\Phi(\tfrac 12+r)\big)
		\leq\|Z_s(\lcdot,\omega)-Z_{q^n}(\lcdot,\omega)\|_\infty + d\big(Z_{q^n}(\lcdot,\omega),\Phi(\tfrac 12+r)\big)
		\leq 2\delta
	\end{align*}
	for $s$ sufficiently large. Since $\Phi(\frac{1}{2}+r)$ is closed, this proves the claim.
\end{proof}
		
	\begin{lem} \label{add-n21}%
		$\mc{L}(\omega) \supseteq \Phi(\frac{1}{2})$ for almost all $\omega \in \Omega$.
	\end{lem}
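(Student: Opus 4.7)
The plan is to combine the moderate deviation lower bound (Theorem~\ref{add-n01}) with the independent-increment structure of $X$ evaluated on the geometric subsequence $t_n = q^n$, then invoke the second Borel--Cantelli lemma, in the spirit of \cite[Chapter~13]{bm2}. Since $\mc{L}(\omega)$ is automatically closed, and $\{f \in D[0,1]; I(f) < \tfrac{1}{2}\}$ is dense in $\Phi(\tfrac{1}{2})$ --- because $I((1-\eps)f) = (1-\eps)^2 I(f)$ and $(1-\eps)f \to f$ uniformly as $\eps \to 0$ --- it is enough to fix a countable dense family $(f_k)_{k \in \mbb{N}}$ with $I(f_k) < \tfrac{1}{2}$ and show $f_k \in \mc{L}(\omega)$ almost surely for each $k$.

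Fix such an $f$, $q > 1$ (to be chosen large), and $\delta > 0$. Set
\begin{equation*}
\xi_n(s) := \begin{cases} 0, & s \in [0, q^{-1}), \\ \dfrac{X(sq^{n+1}) - X(q^n)}{S(q^{n+1})}, & s \in [q^{-1}, 1]. \end{cases}
\end{equation*}
The processes $(\xi_n)_{n \in \mbb{N}}$ are independent since they depend on disjoint increments of $X$. Let $\tilde f := (f - f(q^{-1})) \I_{[q^{-1}, 1]}$. The key identity $\xi_n(s) = Z_{q^{n+1}}(s) - Z_{q^{n+1}}(q^{-1})$ for $s \in [q^{-1}, 1]$ gives, via the triangle inequality, the inclusion $\{\|Z_{q^{n+1}} - f\|_\infty < \delta/2\} \subseteq \{\|\xi_n - \tilde f\|_\infty < \delta\}$, where $Z_t := X(t \lcdot)/S(t)$.

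Apply the moderate deviation lower bound to $Z_{q^{n+1}}$. For any $\eps > 0$ with $2(I(f) + \eps) < 1$ (possible since $2I(f) < 1$), we obtain for $n$ sufficiently large
\begin{equation*}
\mbb{P}(\|\xi_n - \tilde f\|_\infty < \delta) \geq \mbb{P}(\|Z_{q^{n+1}} - f\|_\infty < \tfrac{\delta}{2}) \geq c_q \cdot n^{-2(I(f) + \eps)},
\end{equation*}
a non-summable lower bound. Independence of $(\xi_n)$ and the second Borel--Cantelli lemma then give $\|\xi_n - \tilde f\|_\infty < \delta$ infinitely often, almost surely. On that event, for $s \in [q^{-1}, 1]$, $Z_{q^{n+1}}(s) = \xi_n(s) + X(q^n)/S(q^{n+1})$, while on $[0, q^{-1})$, $|Z_{q^{n+1}}(s)| \leq \sup_{t \leq q^n} |X_t|/S(q^{n+1})$. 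By Lemma~\ref{add-n19} (the upper bound just proved), both $|X(q^n)|/S(q^{n+1})$ and this supremum are $O(q^{-\gamma/2})$ almost surely; moreover $|f(s)| \leq \sqrt{2 I(f)}\, s^{\gamma/2} = O(q^{-\gamma/2})$ for $s \leq q^{-1}$ by Cauchy--Schwarz applied to $f(s) = \int_0^s f'(r)\, dr$. Combining, $\|Z_{q^{n+1}} - f\|_\infty \leq \delta + C q^{-\gamma/2}$ for infinitely many $n$. Letting $\delta \to 0$ and $q \to \infty$ along a countable sequence, diagonalised over $(f_k)$, produces $f_k \in \mc{L}(\omega)$ a.s.\ for every $k$, as required.

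The main obstacle is constructing events that are simultaneously independent (to power a Borel--Cantelli argument) and admit a probabilistic lower bound with the correct rate. The identity $\xi_n(s) = Z_{q^{n+1}}(s) - Z_{q^{n+1}}(q^{-1})$ resolves this elegantly: the events $\{\|\xi_n - \tilde f\|_\infty < \delta\}$ depend on disjoint increments of $X$ and are therefore independent, yet they contain the natural MDP events $\{\|Z_{q^{n+1}} - f\|_\infty < \delta/2\}$, for which Theorem~\ref{add-n01} supplies precisely the required lower bound with rate $I(f)$.
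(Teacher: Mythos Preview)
Your proof is correct and follows essentially the same route as the paper: reduce to $I(f)<\tfrac12$ via the $(1-\eps)f$ density argument, pass to the geometric grid $q^n$, use the increment on $[q^{n-1},q^n]$ together with the MDP lower bound and the second Borel--Cantelli lemma, and control the remaining pieces on $[0,q^{-1}]$. Your explicit construction of independent events via the inclusion $\{\|Z_{q^{n+1}}-f\|_\infty<\delta/2\}\subseteq\{\|\xi_n-\tilde f\|_\infty<\delta\}$ makes the Borel--Cantelli step cleaner than the paper's ``taking a subsequence if necessary'', and your use of Lemma~\ref{add-n19} to bound $\|Z_{q^n}\|_\infty$ (hence $|X(q^n)|/S(q^{n+1})$ and $\sup_{t\le q^n}|X_t|/S(q^{n+1})$) is a mild streamlining over the paper's separate application of Lemma~\ref{add-n17} plus the first Borel--Cantelli lemma---just note that you are really using the stronger conclusion $d(Z_t,\Phi(\tfrac12))\to 0$ a.s.\ established in the \emph{proof} of Lemma~\ref{add-n19}, not merely the inclusion $\mc{L}(\omega)\subseteq\Phi(\tfrac12)$.
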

	
	\begin{proof} %
		Since the sublevel sets are compact, we have $\overline{\bigcup_{r<1/2} \Phi(r)} \subseteq \Phi(\frac{1}{2})$. On the other hand, any $f \in \Phi(\frac{1}{2})$ can be approximated by $(1-\eps) f \in \Phi(\frac{(1-\eps)^2}{2})$. Therefore, $\overline{\bigcup_{r<1/2} \Phi(r)} = \Phi(\frac{1}{2})$. Consequently, it suffices to show that for any $r<\frac{1}{2}$, $\eps>0$, $f \in \Phi(r)$ there is a.\,s.\ a sequence $s_n=s_n(\omega) \to \infty$ such that
		\begin{equation*}
			\limsup_{n \to \infty} \|Z_{s_n}(\omega)-f\|_{\infty} \leq \eps.
		\end{equation*}
		Pick $q>1$. Obviously, \begin{align*}
		 	\|Z_{q^n}-f\|_{\infty}
		 	&\leq \sup_{q^{-1} \leq t \leq 1} \left| \frac{X(t q^n)-X(q^{n-1})}{S(q^n)} - f(t) \right| + \left|\frac{X(q^{n-1})}{S(q^n)}\right| + \sup_{t \leq q^{-1}} |f(t)| + \sup_{t \leq q^{-1}} \left|\frac{X(t q^n)}{S(q^n)}\right|.
		 \end{align*}
		We estimate the terms separately. Setting \begin{align*}
			A := \left\{g \in D[0,1]; \sup_{q^{-1} \leq t \leq 1} |g(t)-g(q^{-1})-f(t)| < \frac{\eps}{4} \right\}
		\end{align*}
		we have \begin{align*}
			\mbb{P}(A_n) := \mbb{P} \left( \sup_{q^{-1} \leq t \leq 1} \left| \frac{X(t q^n)-X(q^{n-1})}{S(q^n)} - f(t) \right| < \frac{\eps}{4} \right)
			&= \mbb{P} \left( \frac{X(q^n \lcdot)}{S(q^n)} \in A \right).
		\end{align*}
		If we choose $q>1$ sufficiently large such that $|f(q^{-1})|<\frac{\eps}{4}$, then $f \in A$. By assumption, $I(f) < \frac{1}{2}$ and therefore we conclude from the large deviation lower bound that $\sum_{n \in \mbb{N}} \mbb{P}(A_n)=\infty$. Taking a subsequence, if necessary, we obtain by applying the Borel--Cantelli lemma \begin{equation*}
			\adjustlimits\limsup_{n \to \infty} \sup_{q^{-1} \leq t \leq 1} \left| \frac{X(t q^n)-X(q^{n-1})}{S(q^n)} - f(t) \right| \leq \frac{\eps}{4}.
		\end{equation*}
		By H\"older's inequality, \begin{equation*}
			\sup_{t \leq q^{-1}} |f(t)|^2
			= \sup_{t \leq q^{-1}}\left|\int_0^{t} f'(s) \, ds\right|^2
			\leq \left( \int_0^1 \frac{f'(s)^2}{s^{\gamma-1}} \, ds \right) \left( \int_0^{q^{-1}} s^{\gamma-1} \, ds \right)
			\leq \frac{1}{q^{\gamma}}.
		\end{equation*}
		Moreover, \begin{align*}
			\mbb{P} \left( \left| \frac{X(q^{n-1})}{S(q^n)} \right| \geq \frac{\eps}{4} \right)+\mbb{P} \left( \sup_{0 \leq t \leq q^{-1}} \left| \frac{X(t q^n)}{S(q^n)} \right| \geq \frac{\eps}{4} \right)
			&\leq 2 \mathbb{P} \left( \sup_{0 \leq t \leq 1} \left|\frac{X(q^{n-1}t)}{S(q^{n-1})} \right| \frac{S(q^{n-1})}{S(q^{n})} \geq \frac{\eps}{4} \right).
			\end{align*}
		By Lemma~\ref{add-n17}, it is not difficult to see that we may apply again the Borel--Cantelli lemma if we choose $q>1$ sufficiently large. Hence,
		\begin{equation*}
			\limsup_{n \to \infty} \left(\left|\frac{X(q^{n-1})}{S(q^n)}\right| + \sup_{t \leq q^{-1}} |f(t)| + \sup_{t \leq q^{-1}} \left|\frac{X(t q^n)}{S(q^n)}\right| \right) \leq \frac{3}{4} \eps.
		\end{equation*}
		This finishes the proof.
	\end{proof}
	
	Combining  Lemma~\ref{add-n19} and Lemma~\ref{add-n21} yields Theorem~\ref{add-n03}. Finally, it remains to prove Corollary~\ref{add-n05}.
	
	\begin{proof}[Proof of Corollary~\ref{add-n05}]
		It follows from the assumptions that there exists an additive process $(Y_t)_{t \geq 0}$ satisfying the assumptions of Theorem~\ref{add-n01} such that $((X(t \lcdot)-Y(t \lcdot))/S(t))_{t > 0}$ converges uniformly in $D[0,1]$ to $0$ as $t \to \infty$. Therefore, the claim follows from Theorem~\ref{add-n03}. For more details, we refer the reader to \cite[Proposition 3.1,3.2]{wang}.
	\end{proof}

\section{Concluding remarks} \label{s-rem} \begin{enumerate}
	\item Theorem~\ref{add-n01} still holds if $(X_t)_{t \geq 0}$ has also fixed jump discontinuities. Taking a close look at the proof reveals that we simply have to modify the estimate of the moment generating function in Lemma~\ref{add-n07} appropriately. The corresponding estimate follows from the explicit formula for the exponential moments, cf.\ \cite{fuji}, and well-known elementary inequalities.
	\item Let $(X_t)_{t \geq 0}$ be an additive process such that $\mbb{E}X_t=0$ and \begin{equation*}
		\lim_{t \to \infty} \frac{\var X_t}{h(t)} =\sigma^2>0
	\end{equation*}
	for a regulary varying function $h:(0,\infty) \to (0,\infty)$ of index $\gamma >0$ (see e.\,g.\ \cite{bing}  for the definition). If we replace $t^{\gamma}$ in Theorem~\ref{add-n01} and Theorem~\ref{add-n03} by $h(t)$, then both theorems remain valid.
\end{enumerate}

\begin{ack}
	Financial support through the \emph{Deutsche Forschungsgemeinschaft}, DFG, grant SCHI 419/5--2 is gratefully acknowledged. 
\end{ack}

\end{document}